\title{}
\author{}
\date{}
\titleformat{\chapter}[display]
{\bfseries\huge}
{\filcenter\MakeUppercase{\chaptertitlename} \Huge\thechapter}
{1ex}
{\titlerule\vspace{1ex}\filcenter}
[\vspace{1ex}\titlerule]
\numberwithin{equation}{section}
\newtheorem{conj}{Conjecture}
\newtheorem{thm}{Theorem}[section]
\newtheorem{prop}[thm]{Proposition}
\newtheorem{coroll}[thm]{Corollary}
\newtheorem{lemma}[thm]{Lemma}
\theoremstyle{remark}
\newtheorem{osserv}{Remark}[section]
\theoremstyle{definition}
\newcommand{\Z}{\mathbb{Z}}
\newcommand{\Co}{\mathbb{C}}
\newcommand{\R}{\mathbb{R}}
\newcommand{\Q}{\mathbb{Q}}
\newcommand{\p}{\mathbb{P}}
\newcommand{\Qal}{\overline{\Q}}
\theoremstyle{plain} 
\newcommand{\thistheoremname}{}
\newtheorem{genericthm}[thm]{\thistheoremname}
\newenvironment{namedthm}[1]
{\renewcommand{\thistheoremname}{#1}%
	\begin{genericthm}}
	{\end{genericthm}}
\newtheorem*{genericthm*}{\thistheoremname}
\newenvironment{namedthm*}[1]
{\renewcommand{\thistheoremname}{#1}%
	\begin{genericthm*}}
	{\end{genericthm*}}
\DeclareMathOperator{\ai}{a}
\DeclareMathOperator{\bi}{b}
\DeclareMathOperator{\ci}{c}
\DeclareMathOperator{\Frob}{Frob}
\DeclareMathOperator{\paruno}{(}
\DeclareMathOperator{\pardue}{)}
\DeclareMathOperator{\h}{ht}
\newcommand*{\math@version@bold}{bold}
\DeclareMathOperator\Sha{
	\textrm{%
		\usefont{T2A}{cmr}{\ifx\math@version\math@version@bold bx\else m\fi}{n}%
		\CYRSH
	}%
}
\newcommand\blfootnote[1]{%
	\begingroup
	\renewcommand\thefootnote{}\footnote{#1}%
	\addtocounter{footnote}{-1}%
	\endgroup
}
\begin{document}
	\begin{center}
		\textbf{ARITHMETIC STATISTICS OF FAMILIES OF\\INTEGER $S_n$-POLYNOMIALS AND APPLICATION TO CLASS GROUP TORSION}
	\end{center}
\begin{center}
	Ilaria Viglino
\end{center}
\begin{center}
	\textbf{Abstract}
	
\end{center}
\begin{addmargin}[2em]{2em}

\fontsize{10pt}{12pt}\selectfont
	We study the distributions of the splitting primes in certain families of number fields. The first and main example is the
	family $\mathscr{P}_{n,N}$ of polynomials $f\in\Z[X]$ monic of degree $n$ with height less or equal then $N$, and then let $N$ go to infinity.
 We prove an average version of the Chebotarev Density Theorem for this family. In particular, this gives a Central Limit Theorem for the number of primes with given splitting type in some ranges. As an application, we deduce some estimates for the $\ell$-torsion in the class groups and for the average of ramified primes.
\end{addmargin}
\normalsize
\section{Introduction}

Let $n\ge2$ and $N$ be positive integers. The set $\mathscr{P}_{n,N}$ consists of all monic polynomials $$f(X)=X^n+a_{n-1}X^{n-1}+\dots+a_0$$in $ \Z[X] $, of degree $ n $ and we view the coefficients $ a_0,\dots,a_{n-1} $ as independent, identically distributed random variables taking values uniformly in $ \{-N,\dots,N \} $ with $ N\rightarrow\infty $. 

We are interested to a subset of $\mathscr{P}_{n,N}$, namely$$\mathscr{P}_{n,N}^{0}=\{f\in \mathscr{P}_{n,N}:G_f\cong S_n\},$$where $ K_f $ is the splitting field of $ f $ inside a fixed algebraic closure $ \Qal $ of $ \Q $, and $G_f$ is the Galois group of $K_f/\Q$.

The Galois group $G_f\simeq S_n$, the symmetric group, occurs with probability one. This was first proved by van der Waerden in \cite{Wa}, by using Hilbert Irreducibility Theorem. Later, Gallagher gave another proof usign large sieve in \cite{Gal}. On the other hand, all $S_n$-extensions of $\Q$ arise in this way for some $f$.\\
For a polynomial $f\in\mathscr{P}_{n,N}$, it is well known that$$\lim_{N\rightarrow\infty}\p(\mbox{f irreducible})=1.$$The error term, given by Chow and Dietmann \cite{CD} is $ O_n(N^{-1}) $ for $ n>2 $. Van der Waerden \cite{Wa} proved that $$\frac{|\mathscr{P}_{n,N}^{0}|}{|\mathscr{P}_{n,N}|}\underset{N\rightarrow+\infty}{\longrightarrow}1$$with explicit error term $ O_n(N^{n-\frac{1}{6(n-2)\log\log N}}) $. The error term has improved in \cite{Gal} using large sieve to $ O_n(N^{-1/2}\log N) $, and more recently by Dietmann \cite{Di} using resolvent polynomials to $O_n(N^{-2+\sqrt{2}+\varepsilon})$. The best estimate can be found in \cite{Bh1}, who proved the following result, conjectured by van der Waerden.

\begin{thm}[Bhargava]
	If $n\ge5$, one has, $$|\mathscr{P}_{n,N}^{0}(\Q)|=(2N)^n+O_n(N^{n-1}),$$as $ N\rightarrow\infty $.
\end{thm}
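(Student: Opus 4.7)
The plan is to bound the defect $|\mathscr{P}_{n,N} \setminus \mathscr{P}_{n,N}^0|$ by $O_n(N^{n-1})$; combined with the trivial expansion $|\mathscr{P}_{n,N}| = (2N+1)^n = (2N)^n + O_n(N^{n-1})$, this delivers the stated formula. A polynomial $f\in\mathscr{P}_{n,N}$ fails to have $G_f\cong S_n$ exactly when $G_f$ is contained in a proper (possibly intransitive) subgroup of $S_n$, so I would stratify by conjugacy class of subgroup and treat three mutually exhaustive families: reducible $f$, irreducible $f$ with $G_f\subseteq A_n$, and irreducible $f$ with $G_f$ contained in a proper transitive subgroup $H\not\subseteq A_n$.

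For the reducible case, one writes $f=gh$ with $g,h\in\Z[X]$ monic of positive degrees $k$ and $n-k$. Mahler-type height inequalities pin the coefficients of $g$ and $h$ in terms of the height of $f$. A bookkeeping that separates $k=1$ (rational roots, where one sums over divisors of $a_0$, with the slice $a_0=0$ contributing exactly $(2N+1)^{n-1}$) from $k\ge 2$ (where both factors have bounded heights) gives $O_n(N^{n-1})$ for each $k$, and hence $O_n(N^{n-1})$ overall.

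For each proper transitive $H\subsetneq S_n$ with $H\not\subseteq A_n$, classical invariant theory furnishes a linear resolvent polynomial $R_H(X)\in\Z[X]$ whose coefficients are polynomials in the $a_i$, such that $G_f$ being contained in a conjugate of $H$ forces $R_H$ to acquire a rational root or a nontrivial factorization. This cuts out a proper Zariski-closed subset $V_H\subset\mathbb{A}^n_{\Z}$ of dimension $<n$; the standard hypersurface lattice-point estimate bounds integer points of height $\le N$ on $V_H$ by $O_n(N^{n-1})$, and summing over the finitely many conjugacy classes of such $H$ contributes $O_n(N^{n-1})$ in total.

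The main obstacle is the case $G_f\subseteq A_n$ with $f$ irreducible: the condition is precisely that $\mathrm{disc}(f)\in\Z$ is a perfect square. The discriminant is a single polynomial of degree $2n-2$, yielding a codimension-one condition, but trivial estimates (summing over the hypersurfaces $\{\mathrm{disc}(f)=m^2\}$ as $m$ varies up to $N^{n-1}$) fall well short of the sharp target $O_n(N^{n-1})$. The innovation of \cite{Bh1} is an invariant-theoretic parametrization of degree-$n$ orders of square discriminant by integer orbits in a suitable prehomogeneous representation, combined with geometry-of-numbers averaging over a fundamental domain for the associated arithmetic group action; this yields the sharp count. The hypothesis $n\ge 5$ enters through the structure of this parametrization, and this is the step I expect to consume the bulk of the argument.
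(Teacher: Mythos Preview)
The paper does not prove this theorem. It is stated in the introduction as a result of Bhargava, with a citation to \cite{Bh1}, and serves only as background justifying the asymptotic $|\mathscr{P}_{n,N}^0| = (2N)^n + O_n(N^{n-1})$ that the paper invokes elsewhere (for instance in the proof of Lemma~\ref{l1}). There is therefore no proof in the paper to compare your outline against.

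As a sketch of the argument in \cite{Bh1}, your middle step contains a real error. The claim that the resolvent condition for a proper transitive $H \not\subseteq A_n$ ``cuts out a proper Zariski-closed subset $V_H \subset \mathbb{A}^n_{\Z}$ of dimension $< n$'' is incorrect: requiring $R_H$ to have a rational root, or a nontrivial rational factorization, is a thin-set condition in Serre's sense, not a Zariski-closed one, and such thin sets are typically Zariski-dense in the coefficient space. Dietmann's resolvent bounds \cite{Di} proceed instead by counting integral points on the affine hypersurface $R_H(t;a_0,\ldots,a_{n-1})=0$ in one extra variable, which is genuinely arithmetic and yields exponents strictly below $n$ but not in general $n-1$. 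Bhargava's proof in \cite{Bh1} treats the non-$A_n$ transitive cases by separating imprimitive from primitive $H$ and invoking, among other tools, bounds on the number of degree-$n$ fields of bounded discriminant; the $A_n$ case---which you rightly identify as the crux---is handled via Fourier analysis and counting arguments keyed to the index $[\mathcal{O}_{\Q_f}:\Z[\alpha]]$, rather than by the prehomogeneous-representation parametrizations of his earlier field-counting work.
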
The cubic and quartic cases of van der Waerden's conjecture were proved by Chow and Dietmann in \cite{CD}.

\blfootnote{This paper is an extract from my Ph.D. thesis, in which it is treated a more general case, as mentioned in Section 6.3.}

\subsection{Main results of this paper}

The main theorem that we state here is proven in a more general version in Section 4. It is a Central Limit Theorem. The limit below shows how $\pi_f(x)$ fluctuates about the central value $\pi(x)/n!$, the one expected by the Chebotarev Density Theorem. Let $f\in \mathscr{P}_{n,N}^0$ and let $\pi_f(x)$ the number of primes less or equal than $x$ splitting completely in $K_f/\Q$. Here $\pi(x)$ denotes the number of rational primes less or equal than $x$. \begin{thm}\label{theorem1}
	
	For $x\le N^{1/\log\log N}$ and for any real numbers $a<b$ one has$$\frac{1}{|\mathscr{P}_{n,N}^0|}\Big|\Big\{f\in\mathscr{P}_{n,N}^0:a\le\Big(\frac{1}{n!}-\frac{1}{n!^2}\Big)^{-1/2}\Big(\frac{\pi_f(x)-\frac{1}{n!}\pi(x)}{\pi(x)^{1/2}}\Big)\le b\Big\}\Big|$$ $$\underset{N\rightarrow+\infty}{\longrightarrow}\frac{1}{\sqrt{2\pi}}\int_{a}^{b}e^{-t^2/2}dt.$$
\end{thm}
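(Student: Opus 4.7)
The natural approach is the method of moments. Write $X_p(f) = \mathbf{1}\{p \text{ splits completely in } K_f\}$, so that $\pi_f(x) = \sum_{p \le x} X_p(f)$, and set $\sigma^2 = 1/n! - 1/(n!)^2$. Since the standard normal is moment-determined, it suffices to show that for every fixed integer $k \ge 0$,
\[ \frac{1}{|\mathscr{P}_{n,N}^0|}\sum_{f \in \mathscr{P}_{n,N}^0}\!\left(\frac{\pi_f(x) - \pi(x)/n!}{\sigma\,\pi(x)^{1/2}}\right)^{\!k} \underset{N\to\infty}{\longrightarrow} \mu_k, \]
where $\mu_k = (k-1)!!$ for $k$ even and $\mu_k = 0$ for $k$ odd are the moments of $\mathcal{N}(0,1)$. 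Expanding the $k$-th power reduces matters to the joint moments $\mathbb{E}\bigl[\prod_i (X_{p_i}-1/n!)\bigr]$ over $k$-tuples $(p_1,\dots,p_k)$ of primes in $[2,x]$, which are then grouped by the set partition of $\{1,\dots,k\}$ recording the coincidences among the $p_i$.

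For a single prime $p \nmid \mathrm{disc}(f)$, $X_p(f)=1$ iff $f \bmod p$ is a product of $n$ distinct linear factors in $\mathbb{F}_p[X]$; the number of such monic degree-$n$ polynomials is $\binom{p}{n} = p^n/n! + O_n(p^{n-1})$. For distinct primes $p_1,\dots,p_r$ with product $q = p_1 \cdots p_r$, the Chinese remainder theorem together with the $O_n(q/N)$ discrepancy of the reduction $\{-N,\dots,N\}^n \to (\mathbb{Z}/q\mathbb{Z})^n$ gives
\[ \mathbb{P}\!\left(X_{p_1} = \cdots = X_{p_r} = 1\right) \;=\; \prod_{i=1}^r\!\left(\tfrac{1}{n!} + O_n(p_i^{-1})\right) \;+\; O_n(q/N). \]
Replacing the average over $\mathscr{P}_{n,N}^0$ by the one over $\mathscr{P}_{n,N}$ costs only $O_n(1/N)$ by Bhargava's theorem above. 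From these inputs one checks that $\mathbb{E}[(X_p - 1/n!)^m] = \sigma^2 + O_n(1/p)$ for $m = 2$ and $O_n(1/p)$ for $m = 1$, with higher centred moments uniformly bounded.

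Grouping the $k$-tuples of primes by the induced set partition, only pair partitions survive to leading order: a block of size $\ge 3$ contributes $O_n(\pi(x))$ rather than $\sigma^2\pi(x)$, producing one fewer factor of $\pi(x)^{1/2}$ than an all-pairs partition, while singleton blocks contribute only $O(\log\log x)$. The pair partitions together yield $(k-1)!!\,\sigma^k\pi(x)^{k/2}(1+o(1))$, which after the normalization in the statement matches $\mu_k$; odd normalized moments vanish in the limit. The hypothesis $x \le N^{1/\log\log N}$ is used to dispose of the accumulated error $O_n(q/N)$: summed over all $r$-tuples ($r \le k$) of primes up to $x$, it is at most $O_n(x^{2k}/N) = O_n(N^{2k/\log\log N - 1}) = o(1)$ for every fixed $k$. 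The main obstacle is therefore technical rather than conceptual: one must bundle the various error terms uniformly, treat the at most $O_n(\log N)$ primes dividing $\mathrm{disc}(f)$ as a negligible additive perturbation of $\pi_f(x)$, and verify that the approximate-independence input is strong enough to isolate the correct partition-by-partition asymptotics.
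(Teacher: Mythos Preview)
Your proposal is correct and follows essentially the same route as the paper: both prove the CLT by the method of moments, with the key arithmetic input being the approximate independence of the indicators $X_p$ for distinct primes $p$, obtained from the Chinese remainder theorem together with the equidistribution of $\mathscr{P}_{n,N}^0$ in residue classes modulo $p_1\cdots p_r$ (the paper's Lemma~\ref{l1}). The one technical difference worth noting is that the paper, following Granville--Soundararajan, centers each indicator at the exact $p$-dependent mean $|\mathscr{X}_{n,r,p}|/p^n$ rather than at $1/n!$; this makes singleton blocks vanish identically (the square-full condition in Lemma~\ref{l2}) and buys uniformity in $k$ up to $k\ll\log\log N$ in Proposition~\ref{p2}, whereas your direct centering at $1/n!$ leaves singletons contributing $O(\log\log x)$ but is entirely sufficient for the fixed-$k$ convergence required by the moment method.
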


\begin{osserv}
	The above result holds when $x$ is small compared to $N$, in particular $x\ll_\varepsilon N^{\varepsilon}$ for every $\varepsilon>0$. The reason behind it will be clarified in the proof of Lemma \ref{l1} and Proposition \ref{p1}. Roughly speaking, we want to ensure that there are enough (in a suitable sense) $S_n$-polynomials congruent to a given polynomial modulo primes $p\le x$. It would be interesting, and it is still an open question, to know what is the right range of $x$ and $N$ for which Theorem \ref{theorem1} holds.
	
\end{osserv}

\subsubsection{First application}

The main application we discuss here concerns some bounds for the $\ell$-torsion part of the class number of the above number fields, which improve on the trivial bound provided by the order of the entire class group. The class number of a number field $K/\Q$, that we denote by $h_K$, is by definition the order of the ideal class group of its ring of integers, that is, the abelian quotient group of the the group of fractional ideals and its subgroup of principal ideals. It measures the "distance" of a ring of integers to be a unique factorization domain; namely, this turns out to be the case if and only if the class number is 1.\\

Let $f\in\mathscr{P}_{n,N}^0$; we denote by $\Q_f$ the field associated to $f$, that is, the the field generated over $\Q$ by a root of $f$. Let $D_{\Q_f}$ be the absolute value of the discriminant of $\Q_f/\Q$, and let  $d_f$ be the discriminant of the polynomial $f$.
\begin{coroll}
	For every positive integer $\ell$, for almost all $ f \in\mathscr{P}_{n,N}^0$ outside of a set of size $ o(N^n) $, the $\ell$-torsion part of the class group of $\Q_f/\Q$ is bounded by$$h_{\Q_f}[\ell]\ll_{n,\ell,\varepsilon}D_{\Q_f}^{\frac{1}{2}-\frac{1}{(2n-2)\log\log d_f}+\varepsilon},$$for every $\varepsilon>0$, as $ N\rightarrow+\infty $.
\end{coroll}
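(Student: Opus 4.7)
The strategy is to combine Theorem~\ref{theorem1} with the Ellenberg--Venkatesh method for bounding class group torsion. Recall the Ellenberg--Venkatesh bound: for a number field $K/\Q$ of degree $n$, if $\mathcal{O}_K$ admits $M$ distinct prime ideals of norm at most $X \le D_K^{1/(2\ell(n-1))}$, then
$$h_K[\ell] \ll_{n,\ell,\varepsilon} D_K^{1/2+\varepsilon}\, M^{-1}.$$
Thus any improvement on the Minkowski-type bound $h_K[\ell] \le h_K \ll D_K^{1/2+\varepsilon}$ reduces to exhibiting many small prime ideals of $\mathcal{O}_K$.

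Specialise to $K = \Q_f$. A rational prime $p$ splits completely in the Galois closure $K_f$ exactly when $f$ factors into $n$ distinct linear factors modulo $p$, which in turn is equivalent to $p\mathcal{O}_{\Q_f}$ being a product of $n$ distinct degree-one primes. In particular, the number of degree-one prime ideals of $\mathcal{O}_{\Q_f}$ of norm at most $x$ is at least $n\,\pi_f(x)$, up to a $O(1)$ contribution from ramified primes which is absorbed in the implied constant. Choose $x = N^{1/\log\log N}$, the top of the admissible range in Theorem~\ref{theorem1}. The CLT implies that the subset of $f \in \mathscr{P}_{n,N}^0$ with $\pi_f(x) < \pi(x)/(2\cdot n!)$ has density $o(1)$, and hence cardinality $o(N^n)$ (using $|\mathscr{P}_{n,N}^0| \asymp N^n$ from van der Waerden / Bhargava). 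For every $f$ outside this exceptional set, $M \ge n\,\pi_f(x) \gg_n \pi(x) \gg x/\log x$.

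It remains to translate $x$ into a power of $D_{\Q_f}$. The divisibility $D_{\Q_f} \mid d_f$ together with $|d_f| \le C_n N^{2n-2}$ yields $N \ge D_{\Q_f}^{1/(2n-2)}$, hence
$$x \ge D_{\Q_f}^{1/((2n-2)\log\log N)}.$$
Since $\log d_f \asymp \log N$ on the generic polynomials of interest, $\log\log N \asymp \log\log d_f$, and inserting the lower bound for $M$ into the Ellenberg--Venkatesh estimate yields
$$h_{\Q_f}[\ell] \ll_{n,\ell,\varepsilon} D_{\Q_f}^{\,1/2-1/((2n-2)\log\log d_f)+\varepsilon},$$
as claimed; the hypothesis $x \le D_{\Q_f}^{1/(2\ell(n-1))}$ is satisfied as soon as $\log\log N > \ell$, i.e.\ for all $N$ sufficiently large depending on $\ell$. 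The genuine difficulty of the argument is really hidden in Theorem~\ref{theorem1}: the $\log\log$ factor in the exponent is forced by the admissible range $x \le N^{1/\log\log N}$ of the CLT, and a polynomial saving in the exponent would require substantially enlarging that range, which is currently open.
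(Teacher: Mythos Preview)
Your overall strategy coincides with the paper's: apply Theorem~\ref{t1} at $x=N^{1/\log\log N}$ to produce $\gg\pi(x)$ totally split primes for almost all $f$, then feed these into the Ellenberg--Venkatesh lemma for $\Q_f$. There is, however, a genuine gap in your verification of the Ellenberg--Venkatesh hypothesis $x\le D_{\Q_f}^{1/(2\ell(n-1))}$. The divisibility $D_{\Q_f}\mid d_f$ together with $|d_f|\ll N^{2n-2}$ only yields an \emph{upper} bound $D_{\Q_f}\ll N^{2n-2}$; it says nothing from below. Your claim that ``$\log\log N>\ell$ suffices'' tacitly assumes $D_{\Q_f}\asymp N^{2n-2}$, but since $d_f=a_f^{2}D_{\Q_f}$ and the index $a_f$ can be large, $D_{\Q_f}$ may in principle be far smaller than $N^{2n-2}$. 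Without a lower bound on $D_{\Q_f}$ the inequality $N^{1/\log\log N}\le D_{\Q_f}^{1/(2\ell(n-1))}$ can fail, and then Ellenberg--Venkatesh is not applicable with your choice of $x$.

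The paper closes exactly this gap with Lemma~\ref{l3}, which shows that the set of $f\in\mathscr{P}_{n,N}^{0}$ with $D_{\Q_f}\le N^{1/\sqrt{\log\log N}}$ has density zero (the proof counts algebraic integers of bounded height in each field via Davenport's lemma and then sums over fields using Schmidt's bound on $|\mathscr{F}_n(X,S_n)|$). Once one knows $D_{\Q_f}>N^{1/\sqrt{\log\log N}}$ for almost all $f$, the threshold is met as soon as $\sqrt{\log\log N}>2\ell(n-1)$, and the remainder of your argument goes through verbatim. So the missing ingredient is not a different idea but a nontrivial auxiliary estimate ruling out anomalously small field discriminants.
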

This will be proved in Section 5.2, in which we also state the crucial result due to Ellenberg and Venkatesh (\cite{EV}, Lemma 2.3) in order to show the above corollary.
\subsubsection{Second application}

Let $f$ be an $S_n$-polynomial and let $d_f$ be its discriminant. The relation between $d_f$ and the discriminant $D_{\Q_f}$ of its field is still an open problem in many cases, and leads to difficulties when counting ramified primes.

Call an irreducible monic integral polynomial $f$ \textit{essential} if the equality between the two discriminant holds. It is well known that this
implies that the ring of integers of the splitting field of $f$ is monogenic.

Our results can be applied to study this relation, and to bound on average the number of primes dividing the discriminant of the extension $\Q_f/\Q$.

\begin{coroll}For almost all $ f \in\mathscr{P}_{n,N}^0$ outside of a set of size $ o(N^n) $, the number of ramified primes in $\Q_f/\Q$ is$$\ll_{n}\log\log N,$$as $N\rightarrow+\infty$.
\end{coroll}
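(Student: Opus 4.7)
The key observation is that every prime $p$ ramified in $\Q_f/\Q$ divides the field discriminant $D_{\Q_f}$, which in turn divides the polynomial discriminant $d_f$. Hence the number of ramified primes is bounded above by $\omega(d_f)$, the number of distinct prime divisors of $d_f$. Since $d_f$ is a polynomial of degree $2n-2$ in the coefficients of $f$, for $f\in\mathscr{P}_{n,N}$ we have $|d_f|\ll_n N^{2n-2}$, so every prime dividing $d_f$ is at most $C_n N^{2n-2}$, and the trivial bound $\omega(d_f)\ll_n\log N/\log\log N$ is too weak.

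The plan is to control $\omega(d_f)$ on average via a second-moment argument. For each prime $p$, the condition $p\mid d_f$ is equivalent to $f\bmod p$ failing to be separable. Counting non-squarefree monic polynomials of degree $n$ in $\mathbb{F}_p[X]$ gives exactly $p^{n-1}$, so by equidistribution of the coefficients of $f\in\mathscr{P}_{n,N}$ modulo $p$ (the same input that drives Theorem \ref{theorem1} in Section 4), one expects
$$\frac{|\{f\in\mathscr{P}_{n,N}^0:p\mid d_f\}|}{|\mathscr{P}_{n,N}^0|}\ll_n \frac{1}{p},$$
uniformly for $p\le C_n N^{2n-2}$. Summing and using Mertens' theorem yields
$$E[\omega(d_f)]=\sum_p\frac{|\{f:p\mid d_f\}|}{|\mathscr{P}_{n,N}^0|}\ll_n\sum_{p\le N^{2n-2}}\frac{1}{p}\ll_n\log\log N.$$
An analogous (essentially independent) estimate for pairs $p\ne q$ dividing $d_f$ gives $\mathrm{Var}(\omega(d_f))\ll_n\log\log N$, so Chebyshev's inequality applied to $\omega(d_f)$ produces
$$\bigl|\bigl\{f\in\mathscr{P}_{n,N}^0:\omega(d_f)>C'_n\log\log N\bigr\}\bigr|\ll_n\frac{|\mathscr{P}_{n,N}^0|}{\log\log N}=o(N^n),$$
for $C'_n$ large enough, which is the desired conclusion.

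The main obstacle will be establishing the uniform estimate $\mathbb{P}(p\mid d_f)\ll_n 1/p$ in the regime where $p$ is comparable to or larger than $N$. For such large primes one cannot simply average over residue classes inside $[-N,N]^n$; instead, a lattice-point estimate on the discriminant hypersurface $\{d_f\equiv 0\pmod p\}\cap[-N,N]^n$ is required, which can be handled by a sieve together with Schwartz--Zippel (or Lang--Weil) for points on the affine variety cut out by $d_f$. Finally, one should verify that passing from $\mathscr{P}_{n,N}$ to the subset $\mathscr{P}_{n,N}^0$ does not distort these densities by more than a negligible amount; this is harmless thanks to Bhargava's bound $|\mathscr{P}_{n,N}\setminus\mathscr{P}_{n,N}^0|=O_n(N^{n-1})$ recalled in the introduction.
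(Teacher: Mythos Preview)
Your overall architecture is sound, and in one respect it is more complete than what the paper actually writes down: the paper only records the first-moment bound $\mathbb{E}_N\bigl(\lvert\{p:p\mid d_f\}\rvert\bigr)=\log\log N+O_n(1)$ (Corollary~5.1) and calls this ``slightly stronger'', but Markov's inequality on the first moment alone yields only a positive-proportion exceptional set, not $o(N^n)$. Your plan to pass through a second moment and Chebyshev is exactly the missing step.

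There is, however, a genuine gap in your treatment of large primes. You assert that $\p_N(p\mid d_f)\ll_n 1/p$ uniformly for $p$ up to $C_nN^{2n-2}$, and propose to obtain this for $p\gtrsim N$ via Schwartz--Zippel or Lang--Weil on the discriminant hypersurface. Neither tool delivers this. Schwartz--Zippel applied to $d_f\bmod p$ on the grid $\{-N,\dots,N\}^n\hookrightarrow\mathbb{F}_p^n$ gives at most $(2n-2)(2N+1)^{n-1}$ solutions, i.e.\ $\p_N(p\mid d_f)\ll_n 1/N$, not $1/p$; summing $1/N$ over primes $N<p\le N^{2n-2}$ produces a main term of size $N^{2n-3}/\log N$, which swamps everything. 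Lang--Weil counts $\mathbb{F}_p$-points of the variety, not lattice points in a box of side $2N+1\ll p$, so it is not applicable here either.

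The fix is the elementary truncation that the paper uses. Split at $p=N^{1/n}$. For $p<N^{1/n}$ the box $[-N,N]^n$ covers each residue class mod $p$ (and mod $pq$ for pairs $p,q<N^{1/(2n)}$) essentially uniformly, so Lemma~\ref{l1} gives $\p_N(p\mid d_f)=1/p+O_n(p^{n-1}N^{-1})$ and the analogous joint estimate for pairs; your first and second moment computations then go through unchanged on this truncated sum. For $p\ge N^{1/n}$ one does not estimate the density at all: since $\lvert d_f\rvert\ll_n N^{2n-2}$, every $f$ with $d_f\ne0$ has at most $\dfrac{\log\lvert d_f\rvert}{\log N^{1/n}}\le 2n(n-1)=O_n(1)$ prime factors in this range. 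Thus $\omega(d_f)=\sum_{p<N^{1/n}}\mathbbm{1}_{p\mid d_f}+O_n(1)$ deterministically, and Chebyshev applied to the small-prime sum finishes the argument exactly as you outlined.
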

See Section 5.1, in which we will state a slightly stronger result.

\section{Notations and background}

\subsection{The big-$O$ and the Vinogradov's notation}
We will make frequent use of various symbols to compare
the asymptotic sizes of quantities. We write $f(x)\ll_a g(x)$ or $f(x) = O_a(g(x))$ to state that there
exists a constant $C=C(a) > 0$ depending on $a$, such that $|f(x)|\le C|g(x)|$ for all $x$ suffciently large. Similarly, we write $f(x)\gg g(x)$ if there exists a constant
$C > 0$ such that $|f(x)|\le C|g(x)|$ for all $x$ suffciently large. We write $f(x)\asymp g(x)$
if both $f(x)\ll g(x)$ and $f(x)\gg g(x)$ hold. Moreover, we state that $f(x)\sim g(x)$ if $f(x)/g(x)\rightarrow1$ as
$x\rightarrow+\infty$, and $f(x)=o(g(x))$ if $f(x)/g(x)\rightarrow0$ as $x\rightarrow+\infty$.

\subsection{The Frobenius automorphism}
For the sake of clarity, we recall that given a normal extension $K$ over $\Q$ of degree $s$, the \textit{Galois group} $G_K=G_{K/\Q}$ of $K$ over $\Q$ is defined to be the group of automorphisms of $K$ that fix $\Q$ pointwise. There is a natural embedding$$G_K\hookrightarrow S_s$$given by the action of the Galois group on the $s$ homomorphisms of $K$ into $\overline{\Q}$. In the following we'll identify $G_K$ with its image via the above morphism. If $p$ is an unramified prime in $K/\Q$, i.e. the inertia group for every prime $\mathfrak{p}$ of $K$ is trivial, there is a canonical isomorphism between the Galois group of the residue fields exension $(\mathcal{O}_K/\mathfrak{p})/\mathbb{F}_p$ and the decomposition group $D_{\mathfrak{p}}$ at $\mathfrak{p}$. Now, $G_{\mathcal{O}_K/\mathfrak{p}/\mathbb{F}_p}$ is cyclic with canonical generator the Frobenius at $p$. The corresponding to $\mathfrak{p}$ is $\Frob_{K/\Q,\mathfrak{p}|p}\in D_{\mathfrak{p}}$. It is the unique element of $G_K$ such that for all $\alpha\in\mathcal{O}_K$ we have$$\Frob_{K/\Q,\mathfrak{p}|p}(\alpha)\equiv \alpha^p\mod\mathfrak{p}.$$If we consider another prime over $p$, that is a conjugated one through an element of the Galois group, the new Frobenius is conjugated to the previous one via the same automorphism. Hence we denote by $\Frob_{K/\Q,p}$ the \textit{Frobenius element} at $p$, namely the conjugacy class of Frobenius automorphisms in $G_K$.

\subsubsection{The number of splitting primes}

Let $ f\in \mathscr{P}_{n,N}^{0} $. We say that $ r=(r_1,r_2,\dots,r_n) $ is the \textbf{splitting type} of $ f$ mod a prime $ p $ if $ f\mod p $ splits into distinct monic irreducible factors (so a square-free factorization), with $ r_1 $ linear factors, $ r_2 $ quadratic factors and so on. For the primes $ p $ that not divide the discriminant $ D_f $ of the extension $ K_f/\Q $, $ r $ corresponds to the cycle structure of the Frobenius element $ \Frob_{K_f/\Q,p}=:\Frob_{f,p} $ acting on the roots of $ f $. For each $ r $ we have $$\sum_{i=1}^{n}ir_i=n.$$Let $ \mathscr{C}_r $ be the conjugacy class in $ S_n $ of elements of cycle type $ r $; then the order of $ \mathscr{C}_r $ is $ n!\delta(r) $, where 

\begin{equation}
	\delta(r)=\prod_{i=1}^{n}\frac{1}{i^{r_i}r_i!} 
	\label{eq1}
\end{equation}
 In general, for a subgroup $G\subseteq S_n$, the elements in a conjugacy class in $G$ necessarily have the same cycle type, but the converse need not to be true. That is, the cycle type of a conjugacy class in $G$ need not determine in uniquely. This uniqueness property does hold for cycle types for the full symmetric group, which implies that the cycle type of an $S_n$-polynomial having a square-free factorization mod $p$ uniquely determines the Frobenius element for an $S_n$-number field obtained by adjoining one root of it.

If $p$ is a prime number not dividing the discriminant of $f$, then we can write $f$ modulo $p$ as a product of distinct irreducible factors
over the finite field $\mathbb{F}_p$. The degrees of these irreducible factors form the splitting
type of $f$ mod $p$; this is also a partition of $n$. Frobenius's theorem
asserts, roughly speaking, that the number of primes with a given
decomposition type is proportional to the number of permutations in $S_n$ with the
same cycle pattern.

\begin{thm}[Frobenius, 1880]
	The density of the set of primes $p$ for which
	$f$ has a given splitting type $r_1,\dots,r_n$ exists, and it is equal to $1/|G_f|$ times the number of $\sigma\in G_f$ with cycle pattern $r_1,\dots,r_n$.
\end{thm}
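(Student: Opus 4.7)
The plan is to reduce the statement to the Chebotarev density theorem applied to the Galois extension $K_f/\Q$, via the standard identification of the splitting type of $f \bmod p$ with the cycle pattern of $\Frob_{f,p}\in G_f\subseteq S_n$. The set of primes dividing the polynomial discriminant $d_f$ is finite and so contributes density zero; I would therefore discard them and work only with $p \nmid d_f$, for which $f \bmod p$ has $n$ distinct roots in $\overline{\mathbb{F}_p}$ and $p$ is unramified in $K_f/\Q$.

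Next I would make precise the correspondence between the factorization of $f\bmod p$ and the action of a Frobenius element. For $p \nmid d_f$, fix a prime $\mathfrak{p}$ of $K_f$ above $p$ and set $\sigma=\Frob_{K_f/\Q,\mathfrak{p}|p}$. The roots $\alpha_1,\dots,\alpha_n$ of $f$ reduce modulo $\mathfrak{p}$ to the $n$ distinct roots of $f\bmod p$, and the permutation of these roots induced by $\sigma$ coincides with the $p$-power Frobenius of $\overline{\mathbb{F}_p}/\mathbb{F}_p$ acting on $\{\overline{\alpha}_1,\dots,\overline{\alpha}_n\}$. Hence the $\sigma$-orbit of $\alpha_i$ has length equal to the degree over $\mathbb{F}_p$ of $\overline{\alpha}_i$, which is exactly the degree of the irreducible factor of $f\bmod p$ containing $\overline{\alpha}_i$. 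It follows that the cycle pattern of $\sigma$ inside $S_n$ is precisely the splitting type $r$ of $f\bmod p$; changing $\mathfrak{p}$ replaces $\sigma$ by a $G_f$-conjugate and preserves the cycle type.

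With this dictionary in place, the problem reduces to computing the density of primes $p$ whose Frobenius conjugacy class $\Frob_{f,p}$ consists of elements of cycle type $r$. The set $\Sigma_r:=\{\sigma\in G_f:\sigma$ has cycle pattern $r\}$ is conjugation-invariant, so it decomposes as a disjoint union of $G_f$-conjugacy classes $C_1,\dots,C_m$. Applying Chebotarev's density theorem to each $C_i$ yields density $|C_i|/|G_f|$, and summing over $i$ gives the desired $|\Sigma_r|/|G_f|$.

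The main obstacle is of course the invocation of Chebotarev itself, which is a deep analytic result. Note however that Frobenius's original 1880 argument predates Chebotarev: it uses Dirichlet $L$-functions to establish the density formula only for \emph{divisions} of $G_f$, i.e.\ equivalence classes under the relation that $\langle\sigma\rangle$ and $\langle\tau\rangle$ are $G_f$-conjugate. This weaker statement already suffices here, because raising a permutation to a power coprime to its order preserves cycle type in $S_n$ (each cycle is sent to a cycle of the same length), so $\Sigma_r$ is automatically a union of divisions of $G_f$ and the division-version of the density formula can then be summed over the divisions contained in $\Sigma_r$.
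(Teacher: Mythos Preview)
Your argument is correct: the reduction to Chebotarev via the dictionary between the factorization of $f\bmod p$ and the cycle structure of $\Frob_{f,p}$ is the standard proof, and your final remark---that $\Sigma_r$ is a union of divisions because powering by an integer coprime to the order preserves cycle lengths, so that Frobenius's original 1880 result already suffices---is both historically accurate and a nice observation.

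However, there is nothing to compare against: the paper does not prove this statement. It is quoted as classical background (attributed to Frobenius, 1880) in Section~2 and used only as motivation for the paper's own ``Chebotarev on average'' results (Proposition~\ref{p1} and Theorem~\ref{t1}). No proof or proof sketch is given in the paper.
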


We want to compute the density (on average) of primes $p$ unramified in $K_f/\Q$ for which $f$ has a given splitting type modulo $p$. This is a special case of the Chebotarev density theorem, for which we
want an explicit asymptotic, with an effective error term, for "almost
all" polynomials in our family.

For $ x\ge1 $, let$$\pi_{f,r}(x)=\underset{f\tiny{\mbox{ splitting type }}r\tiny{\mbox{ mod }} p}{\sum_{p\le x}}1=\sum_{p\le x}\mathbbm{1}_{f,r}(p),$$where $$\mathbbm{1}_{\cdot,r}(p):\mathscr{P}_{n,N}^{0}\longrightarrow\{0,1\}$$takes value 1 if $f$ has splitting type $r$ modulo $p$, and 0 otherwise.
Define $\p_N$ to be the uniform probability measure on the set $ \mathscr{P}_{n,N}^{0} $, seen as a sublattice of $ [-N,N]^n $.

We abbreviate $ \mathbbm{1}_{f,r}(p) $ with $ \mathbbm{1}_{p}(f) $.
We denote by $\mathbb{E}_N$ and $ \sigma^{2}_N $ the expectation and the variance of a random variable on $ \mathscr{P}_{n,N}^{0} $, respectively.\\
In Proposition \ref{p1} below, we compute mean and variance of the random variable $ \mathbbm{1}_{p}$.

\section{Prime splitting densities}

For every splitting type $ r $, let
\begin{align*}
	\mathscr{X}_{n,r,p}=\Big\{&\Big( \prod_{i=1}^{r_1}g_i^{(1)}\Big) \dots\Big(  \prod_{i=1}^{r_n}g_i^{(n)}\Big):g_i^{(j)}\in\mathbb{F}_p[X]\ \mbox{irreducible, monic},\\
	&\deg(g_i^{(j)})=j,\ g_i^{(j)}\neq g_k^{(j)}\mbox{ if }i\neq k\Big\}.
\end{align*}
The following key fact is what we will use to estimate the error term in the asymptotic of the expectation $\mathbb{E}_N(\pi_{f,r}(x))$ of $\pi_{f,r}(x)$ and its powers. The fact that the primes $p$ are small enough with respect to $N$, allows to have "many" $S_n$-polynomials congruent to given polynomials mod $p$.
\begin{lemma}\label{l1} Let $N>2$ and 
	let $k\ge 1$, $p_1,\dots, p_k$ primes and $g_i\in 	\mathscr{X}_{n,r,p_i}$ for all $i=1,\dots,k$. Then if $p_i<N^{1/kn}$ for all $i=1,\dots,k,$ $$\p_N(f\in\mathscr{P}_{n,N}^{0}:f\equiv g_i\mod p_i\ \ \forall i=1,\dots,k)=\frac{1}{(p_1\dots p_k)^n}+O_{n,k}(N^{-1}).$$
	
\end{lemma}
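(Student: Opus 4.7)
The plan is to reduce the system of $k$ congruences to a single one modulo $P=p_1\cdots p_k$ via the Chinese Remainder Theorem, count the polynomials in $\mathscr{P}_{n,N}$ satisfying this congruence coefficient by coefficient, and then pass to $\mathscr{P}_{n,N}^0$ by discarding the negligible contribution of non-$S_n$ polynomials via Bhargava's theorem stated in the introduction.

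First I would form $P=p_1\cdots p_k$ and use CRT to produce the unique monic polynomial $g\in(\Z/P\Z)[X]$ of degree $n$ with $g\equiv g_i\mod p_i$ for all $i$; the joint congruence becomes the single condition $f\equiv g\mod P$, which constrains each of the $n$ free coefficients $a_0,\dots,a_{n-1}$ of $f$ to a prescribed residue class modulo $P$. The hypothesis $p_i<N^{1/kn}$ yields $P<N^{1/n}$, so $(2N+1)/P\gg N^{1-1/n}\gg 1$, and the number of integers in $\{-N,\dots,N\}$ lying in a given residue class modulo $P$ is $(2N+1)/P+O(1)$. Expanding the $n$-fold product gives
$$|\{f\in\mathscr{P}_{n,N}:f\equiv g\mod P\}|=\Big(\frac{2N+1}{P}+O(1)\Big)^n=\frac{(2N+1)^n}{P^n}+O_n\Big(\Big(\frac{N}{P}\Big)^{n-1}\Big).$$

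To pass from $\mathscr{P}_{n,N}$ to $\mathscr{P}_{n,N}^0$ I would invoke Bhargava's theorem, which gives $|\mathscr{P}_{n,N}\setminus\mathscr{P}_{n,N}^0|=O_n(N^{n-1})$; this trivially bounds the count of non-$S_n$ polynomials satisfying the congruence, so
$$|\{f\in\mathscr{P}_{n,N}^0:f\equiv g\mod P\}|=\frac{(2N+1)^n}{P^n}+O_{n,k}(N^{n-1}).$$
Dividing by $|\mathscr{P}_{n,N}^0|=(2N+1)^n(1+O_n(N^{-1}))$, and using that $(N/P)^{n-1}/(2N+1)^n\leq N^{-1}$ since $P\geq 1$, both additive errors collapse into $O_{n,k}(N^{-1})$, which yields the claim.

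The hard part here is essentially the bookkeeping: the threshold $p_i<N^{1/kn}$ is tight for the argument in the sense that it gives exactly $P\leq N^{1/n}$, which is what keeps the coefficient-counting expansion valid and the final relative error at $O_{n,k}(N^{-1})$. The only nontrivial number-theoretic input beyond CRT is Bhargava's sharp bound $|\mathscr{P}_{n,N}\setminus\mathscr{P}_{n,N}^0|=O_n(N^{n-1})$; a weaker van der Waerden-type bound (e.g.\ Gallagher's) in its place would produce a correspondingly weaker error term in the probability than the sharp $O_{n,k}(N^{-1})$ claimed.
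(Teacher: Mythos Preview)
Your proposal is correct and follows essentially the same approach as the paper: count solutions to the congruence coefficient by coefficient, then pass from $\mathscr{P}_{n,N}$ to $\mathscr{P}_{n,N}^0$ using the $O_n(N^{n-1})$ bound on non-$S_n$ polynomials. The only difference is organizational: the paper first treats $k=1$ and then invokes CRT for $k>1$ by (somewhat informally) writing the joint probability as the product $\prod_i(1/p_i^n+O(N^{-1}))$, whereas you apply CRT at the outset to reduce to a single congruence modulo $P=p_1\cdots p_k$ and count once---arguably the cleaner route, and in fact the one that makes the paper's product formula rigorous.
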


\begin{proof}We prove the case $k=1$. An application of the Chinese Remainder Theorem leads to the result for $k>1$.\\
If $ g=\sum_{i=1}^{n}\beta_iX^i $ and $ f=\sum_{i=1}^{n}f_iX^i $, then $ f\equiv g\mod p $ if and only if $ f_i\equiv \beta_i\mod p $ for $ i=0,\dots,n-1 $. This means $ f_i=\beta_i+pk_i $ for some $ k_i\in\Z $ and $ -N\le f_i\le N $. Therefore$$\frac{-N-\beta_i}{p}\le k_i\le\frac{N-\beta_i}{p};$$so for each coefficient $ f_i $ of $ f $ we have$$\Big[\frac{N-\beta_i}{p}\Big]-\Big[\frac{-N-\beta_i}{p}\Big]=\frac{2N}{p}+O(1)$$choices. Hence$$|\{f\in\mathscr{P}_{n,N}:f\equiv g\mod p\}|=\frac{(2N)^n}{p^n}+O(N^{n-1}),$$so\begin{align*}
	|\{f\in\mathscr{P}_{n,N}^0:f\equiv g\mod p\}|&=\underset{f\equiv g\tiny\mbox{ mod }p}{\sum_{f\in\mathscr{P}_{n,N}}}1+O\Big(\sum_{f\notin\mathscr{P}_{n,N}^{0}}1\Big)\\&=\frac{(2N)^n}{p^n}+O(N^{n-1}).
\end{align*}
As long as $ p^n<N $, we get\begin{align*}
	\frac{1}{|\mathscr{P}_{n,N}^{0}|}\underset{f\equiv g\tiny\mbox{ mod }p}{\sum_{f\in\mathscr{P}_{n,N}^{0}}}1&=\frac{1}{(2N)^n}(1+O(N^{n-1}))\Big(\frac{(2N)^n}{p^n}+O(N^{n-1})\Big)\\
	&=(1+O(N^{-1}))\Big(\frac{1}{p^n}+O(N^{-1})\Big)\\
	&=\frac{1}{p^n}+O(N^{-1}).
\end{align*}

Now, if $k>1$, by the Chinese Remainder Theorem we have
\begin{multline*}
	\frac{1}{|\mathscr{P}_{n,N}^{0}|}\underset{f\equiv g_i\tiny\mbox{ mod }p_i\ \forall i}{\sum_{f\in\mathscr{P}_{n,N}^{0}}}1=\left( \frac{1}{p_1^n}+O(N^{-1})\right) \dots \left( \frac{1}{p_k^n}+O(N^{-1})\right) \\
	=\frac{1}{(p_1\dots p_k)^n}+O_n\left(\sum_{1\le j_1<\dots j_{k-1}\le k-1} \frac{1}{(p_{j_1}\dots p_{j_{k-1}})^n}N^{-1}\right) \\
	=\frac{1}{(p_1\dots p_k)^n}+O_{n,k}(N^{-1}).
\end{multline*}
\end{proof}

\subsection{A Chebotarev Density Theorem on average}

Let $r=(r_1,\dots,r_n)$ be a splitting type. The constant $\delta(r)$ is defined as in \ref{eq1} as$$	\delta(r)=\prod_{i=1}^{n}\frac{1}{i^{r_i}r_i!} .$$

\begin{prop}\label{p1} One has, for all primes $ p<N^{1/(n+1)} $,
	\begin{enumerate}
		\item[$\paruno \ai\pardue$] $ \p_N(\mathbbm{1}_{p}=1)=\mathbb{E}_N(\mathbbm{1}_{p})=\delta(r)+\frac{C_{r}}{p}+O_r\Big(\frac{1}{p^2}+p^nN^{-1}\Big) $,\\where $ C_{r}=\delta(r)\frac{r_2(r_2-1)(r_1+1)(r_1+2)}{2^{r_2+1}r_1!r_2!}$;
		\item[$\paruno \bi\pardue$] $ \sigma^2_N(\mathbbm{1}_{p})=(\delta(r)-\delta(r)^2)+\frac{C_{r}(1-2\delta(r))}{p}+O_r\Big(\frac{1}{p^2}+p^nN^{-1}\Big) $. 
	\end{enumerate}It follows that, for $ x<N^{1/(n+1)} $
	\begin{enumerate}
		\item[$\paruno \ci\pardue$] $ \mathbb{E}_N(\pi_{f,r}(x))=\delta(r)\pi(x)+C_{r}\log\log x+O_n(1)$
	\end{enumerate}
\end{prop}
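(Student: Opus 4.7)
The plan is to derive (a) from Lemma \ref{l1} combined with a second-order expansion of the proportion of squarefree monic polynomials of splitting type $r$ over $\mathbb{F}_p$, then obtain (b) immediately from (a) using that $\mathbbm{1}_p$ is Bernoulli, and finally obtain (c) by summing (a) over primes $p \le x$ and applying Mertens' theorem.

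For (a), since $\mathbbm{1}_p(f) = 1$ exactly when $f \bmod p$ lies in $\mathscr{X}_{n,r,p}$, disjointness of residue classes and Lemma \ref{l1} with $k=1$ give
\begin{equation*}
\mathbb{P}_N(\mathbbm{1}_p = 1) = \sum_{g \in \mathscr{X}_{n,r,p}}\mathbb{P}_N(f \equiv g \bmod p) = \frac{|\mathscr{X}_{n,r,p}|}{p^n} + O_n(p^n N^{-1}),
\end{equation*}
using the trivial bound $|\mathscr{X}_{n,r,p}| \le p^n$. It then remains to expand $|\mathscr{X}_{n,r,p}|/p^n$ in powers of $1/p$ with error $O_r(1/p^2)$. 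Writing $|\mathscr{X}_{n,r,p}| = \prod_{j=1}^n \binom{N_j(p)}{r_j}$, where $N_j(p) = \frac{1}{j}\sum_{d \mid j}\mu(d)p^{j/d}$ is the number of monic irreducibles of degree $j$ in $\mathbb{F}_p[X]$, the identity $\sum_j j r_j = n$ cancels the leading powers of $p$. The M\"obius formula yields $N_j(p)/p^j = 1/j + O_j(1/p^2)$ for every $j \ge 3$ (the second-largest term being $\le p^{j/2} \le p^{j-2}$), whereas $N_1(p)/p = 1$ and $N_2(p)/p^2 = 1/2 - 1/(2p)$; hence only the $j=1$ and $j=2$ factors contribute at first order in $1/p$. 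Expanding
\begin{equation*}
\binom{p}{r_1} = \frac{1}{r_1!}\prod_{i=0}^{r_1-1}(p - i), \qquad \binom{N_2(p)}{r_2} = \frac{1}{r_2!}\prod_{i=0}^{r_2-1}\Big(\tfrac{p^2-p}{2}-i\Big),
\end{equation*}
to first order and collecting contributions produces the stated form $\delta(r) + C_r/p + O_r(1/p^2 + p^n N^{-1})$.

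Part (b) is then immediate: since $\mathbbm{1}_p$ takes only the values $0$ and $1$, one has $\sigma^2_N(\mathbbm{1}_p) = \mathbb{E}_N(\mathbbm{1}_p) - \mathbb{E}_N(\mathbbm{1}_p)^2$, and squaring the expansion of (a) and retaining the terms of order $1/p$ gives the claim. For (c), summing (a) over primes $p \le x$ by linearity yields
\begin{equation*}
\mathbb{E}_N(\pi_{f,r}(x)) = \delta(r)\pi(x) + C_r\sum_{p \le x}\frac{1}{p} + O\Big(\sum_{p \le x}\frac{1}{p^2}\Big) + O\Big(\frac{1}{N}\sum_{p \le x}p^n\Big).
\end{equation*}
Mertens' theorem supplies $\sum_{p \le x}1/p = \log\log x + O(1)$; the series $\sum_p 1/p^2$ converges; and by partial summation from the prime number theorem $\sum_{p \le x}p^n \ll x^{n+1}/\log x$, so the last error is $O(1)$ precisely under the hypothesis $x \le N^{1/(n+1)}$. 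The main technical obstacle is the combinatorial bookkeeping in part (a): one must expand $\binom{N_1(p)}{r_1}$ and $\binom{N_2(p)}{r_2}$ to first order in $1/p$, combine the two contributions to pin down the explicit constant $C_r$, and verify that no degree-$j\ge 3$ factor contributes at the $1/p$ level.
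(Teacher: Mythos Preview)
Your proposal is correct and follows essentially the same route as the paper: apply Lemma~\ref{l1} to reduce (a) to the asymptotic expansion of $|\mathscr{X}_{n,r,p}|/p^n$, expand the product $\prod_j \binom{N_j(p)}{r_j}$ to first order in $1/p$, deduce (b) from the Bernoulli identity $\sigma^2=\mathbb{E}-\mathbb{E}^2$, and obtain (c) by summing over $p\le x$ with Mertens and partial summation. One small slip: your parenthetical justification ``the second-largest term being $\le p^{j/2}\le p^{j-2}$'' fails as written for $j=3$ (since $3/2>1$), although the conclusion $N_3(p)/p^3=1/3+O(1/p^2)$ is still correct because $N_3(p)=(p^3-p)/3$; you should treat $j=3$ separately or argue via the largest proper divisor of $j$ rather than $j/2$.
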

Hence, the \textit{normal order} of $ \pi_{f,r}(x) $ is $ \delta(r)\pi(x) $, which means that $ \pi_{f,r}(x)\sim \delta(r)\pi(x)$ for almost all $ f $'s, as $ x\rightarrow+\infty $ and $ N $ large enough. Part (c) of the above proposition is an average version of the Chebotarev Density Theorem.
\begin{proof}
	
	Fix a prime $ p $; then
	\begin{align*}
		\mathbb{E}_N(\mathbbm{1}_{p})&=\frac{1}{|\mathscr{P}_{n,N}^{0}|}\sum_{f\in\mathscr{P}_{n,N}^{0}}\mathbbm{1}_p(f)\\
		&=\frac{1}{|\mathscr{P}_{n,N}^{0}|}\underset{f\tiny{\mbox{ of splitting type }}r\tiny{\mbox{ mod }} p}{\sum_{f\in\mathscr{P}_{n,N}^{0}}}1\\
		&=\frac{1}{|\mathscr{P}_{n,N}^{0}|}\sum_{g\in 	\mathscr{X}_{n,r,p}}\underset{f\equiv g\tiny\mbox{ mod }p}{\sum_{f\in\mathscr{P}_{n,N}^{0}}}1.
	\end{align*}

	On the other hand,$$|	\mathscr{X}_{n,r,p}|=\prod_{k=1}^{n}\binom{A_{p,k}}{r_k},$$where $ A_{p,k} $ is the number of degree-$ k $ monic irreducible polynomials in $ \mathbb{F}_p[X] $, which, by the M\"{o}bius inversion formula, equals$$\frac{1}{k}\sum_{d|k}\mu(d)p^{k/d}=\frac{p^k}{k}+O(p^{\alpha_k}),$$where $\alpha_k=1$ if $k=2$, and $\alpha_k< k-1$ if $k>2$. One has, for all $ k\ge2$\begin{align*}
		\binom{A_{p,k}}{r_k}&=\frac{A_{p,k}(A_{p,k}-1)\dots(A_{p,k}-r_k+1)}{r_k!}\\
		&=\frac{1}{r_k!}\Big(\frac{p^k}{k}+O(p^{\alpha_k})\Big)\dots\Big(\frac{p^k}{k}-r_k+1+O_k(p^{\alpha_k})\Big).
	\end{align*}It turns out that$$
	\binom{A_{p,k}}{r_k}=\begin{cases}
		\frac{1}{r_1!}p(p-1)\dots(p-r_1+1)&\mbox{ if }k=1\\
		\frac{1}{r_2!2^{r_2}}p^{2r_2}+C(r_2)p^{2r_2-1}+O(p^{2r_2-2})&\mbox{ if }k=2\\
		\frac{1}{r_k!k^{r_k}}p^{kr_k}+O(p^{k(r_k-1)+\alpha_k})&\mbox{ if }k>1,
	\end{cases}
	$$
	where $C(r_2)=-\frac{r_2(r_2-1)}{2^{r_2}r_2!}$.
	Hence\begin{multline*}
		|\mathscr{X}_{n,r,p}|=\frac{1}{r_1!}p(p-1)\dots(p-r_1+1)\frac{1}{r_2!2^{r_2}}(p^{2r_2}+C(r_2)p^{2r_2-1}+O(p^{2r_2-2}))\\\prod_{k=3}^{n}(\frac{1}{r_k!k^{r_k}}p^{kr_k}+O(p^{k(r_k-1)+\alpha_k})),
	\end{multline*}so
	\begin{equation}
			|\mathscr{X}_{n,r,p}|=\delta(r)p^{n}+C_rp^{n-1}+O_{n,r}(p^{n-2}),
			\label{eq2}
	\end{equation}
	where $ C_r=-\delta(r)C(r_2)\frac{(r_1+1)(r_1+2)}{2r_1!}.$
	
	By Lemma \ref{l1}, for $ p^{n+1}<N$,\begin{align*}
		\mathbb{E}_N(\mathbbm{1}_{p})&=(\delta(r)p^n+C_{r}p^{n-1}+O(p^{n-2}))\Big(\frac{1}{p^n}+O(N^{-1})\Big)\\
		&=\delta(r)+\frac{C_{r}}{p}+O\Big(\frac{1}{p^2}+p^nN^{-1}\Big),
	\end{align*}which proves (a). Part (b) follows by (a) and the fact that the random variable is either 0 or 1.
	
	For (c), by linearity, we simply have to sum over all primes $ p\le x $ and use the elementary estimate$$\sum_{p\le x}\frac{1}{p}=\log\log x+O(1)$$to get$$\mathbb{E}_N(\pi_{f,r}(x))=\delta(r)\pi(x)+C_{r}\log\log x+O\left( 1+\tfrac{x^{n+1}}{\log x}N^{-1}\right) $$by partial summation. By the assumption on the size of $x<N^{1/(n+1)} $, the term $ \tfrac{x^{n+1}}{\log x}N^{-1} $ is negligible, and we have the claim.
\end{proof}

\begin{osserv}
	 From (c) of Proposition \ref{p1}, we have that for every $m\ge2$, $x<N^{1/(n+1)} $,$$\pi_{f,r}(x)-\delta(r)\pi(x)=O((\log\log x)^m),$$for all but $O_{n}\Big(x^{(n+1)(n-1)}(\log\log x)^{1-m}\Big)$ $S_n$-polynomials $f$ of height $\ll x^{n+1}$. This saves over the total number of polynomials, which is $\asymp_n N^n$.
\end{osserv}

 For $f\in\mathscr{P}_{n,N}^{0}$, Let $\varphi:G_f\rightarrow\Co$ be a class function, i.e. constant on conjugacy classes. 
Define$$\pi_{f,\varphi}(x)=\sum_{p\le x,\ p\nmid D_f}\varphi(\Frob_{f,p}).$$Then, if we sum over the conjugacy classes, that is over the splitting types $r=(r_1,\dots,r_n)$ we get$$\pi_{f,\varphi}(x)=\sum_{r}\varphi(g_r)\pi_{f,r}(x),$$where $g_r$ is any element of the conjugacy class $\mathscr{C}_r$ for every $r$. 
\begin{coroll}\label{c2}
	If $x<N^{1/(n+1)}$,$$\mathbb{E}_N(\pi_{f,\varphi}(x))=\sum_{r}\delta(r)\varphi(g_r)\pi(x)+\sum_{r}\delta(r)\varphi(g_r)\log\log x+O(||\varphi||)$$where $||\varphi||=\sup_{g\in G_f}|\varphi(g)|$, and the implied constant depends on the number of conjugacy classes.
\end{coroll}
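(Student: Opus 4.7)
The plan is to derive the corollary directly from Proposition \ref{p1}(c) by linearity of expectation, together with the decomposition of $\pi_{f,\varphi}$ along splitting types already recorded in the excerpt:
$$\pi_{f,\varphi}(x)=\sum_{r}\varphi(g_r)\pi_{f,r}(x).$$
The key point justifying this identity is that for $p\nmid D_f$ the cycle type $r$ of $\Frob_{f,p}$ (viewed in $G_f\cong S_n$) uniquely determines, and is determined by, the splitting type of $f$ mod $p$, as discussed after equation \eqref{eq1}; this is precisely the place where it matters that $G_f$ is the full symmetric group. Conversely, $\pi_{f,r}(x)$ by definition only receives contributions from primes where $f$ mod $p$ has a square-free factorization of shape $r$, hence from unramified primes, so the sum over $r$ recovers exactly the restriction $p\nmid D_f$ appearing in $\pi_{f,\varphi}$.

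Next, by linearity of $\mathbb{E}_N$ and Proposition \ref{p1}(c), for each splitting type $r$ and every $x<N^{1/(n+1)}$ one has
$$\mathbb{E}_N(\pi_{f,r}(x))=\delta(r)\pi(x)+C_r\log\log x+O_n(1).$$
Multiplying by $\varphi(g_r)$ and summing over the finitely many splitting types of $n$ yields
$$\mathbb{E}_N(\pi_{f,\varphi}(x))=\sum_{r}\delta(r)\varphi(g_r)\pi(x)+\sum_{r}C_r\varphi(g_r)\log\log x+\sum_{r}\varphi(g_r)\cdot O_n(1).$$
(The statement as printed appears to contain a small typo: the coefficient of $\log\log x$ in the second sum should be $C_r$ rather than $\delta(r)$, as produced by part (c) of the proposition.) To convert the last piece into $O(\|\varphi\|)$ I would use the trivial bound $|\varphi(g_r)|\le\|\varphi\|$ together with the fact that the number of splitting types of $n$, i.e. the number of conjugacy classes in $S_n$, equals the partition number $p(n)$, a constant depending only on $n$. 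This produces $\sum_{r}\varphi(g_r)\cdot O_n(1)=O(\|\varphi\|)$ with implicit constant depending on the number of conjugacy classes, exactly as stated.

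There is essentially no hard step: this is a routine linearity-of-expectation argument on top of the already-proven Proposition \ref{p1}. The two points requiring a little care are (i) the bijective correspondence between cycle types and conjugacy classes that justifies $\pi_{f,\varphi}(x)=\sum_{r}\varphi(g_r)\pi_{f,r}(x)$, valid precisely because we are inside the family $\mathscr{P}_{n,N}^0$ of $S_n$-polynomials, and (ii) ensuring that the error in $\mathbb{E}_N(\pi_{f,r}(x))$ is uniform in $r$ so that summation against $\varphi$ contributes only $O(\|\varphi\|)$. Both are immediate given the setup already established.
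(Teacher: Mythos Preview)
Your proposal is correct and matches the paper's intended derivation: the corollary is stated in the paper without an explicit proof because it follows immediately from Proposition~\ref{p1}(c) by linearity of $\mathbb{E}_N$ applied to the decomposition $\pi_{f,\varphi}(x)=\sum_r \varphi(g_r)\pi_{f,r}(x)$, exactly as you outline. Your observation that the coefficient in the $\log\log x$ term should be $C_r$ rather than $\delta(r)$ is also well taken; this is indeed what Proposition~\ref{p1}(c) produces.
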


\subsection{Higher moments}

The following approach is motivated by the proof of the Erd\H{o}s-Kac Theorem in \cite{GS}, where they compute the moments$$\sum_{n\le x}(\omega(n)-\log\log x)^k$$of the prime divisor function, uniformely in a wide range of $ k $. 
Fix a splitting type $ r $ and a prime $ p $. Consider the independent discrete random variables $ X_p $ defined by$$\p(X_p=1)=\frac{|	\mathscr{X}_{n,r,p}|}{p^n}.$$In \ref{eq2} we showed that

\begin{equation}
	\p(X_p=1)=\frac{|	\mathscr{X}_{n,r,p}|}{p^n}=\delta(r)+\frac{C_{r}}{p}+O\left( \frac{1}{p^2}\right) .
\end{equation}For all prime $p$ we define the function$$Y_p(f)=\begin{cases}
	1-\frac{|	\mathscr{X}_{n,r,p}|}{p^n}&\mbox{ if }\mathbbm{1}_p(f)=1,\\
	-\frac{|	\mathscr{X}_{n,r,p}|}{p^n}&\mbox{ otherwise }.
\end{cases}$$Now, we consider a generalization $Y_a$ of the function $Y_p$ for any natural number $a>0$, whose $ k $-moments are small unless $ a $ is a square-full number, that is, it satisfies the following property: $ p^\alpha||a\Rightarrow\alpha\ge 2 $.

In the next lemma we compute an asymptotic for the moments of $Y_p(f)$. We explain here why this yields to the moments of $\pi_{f,r}(x)-\delta(r)\pi(x)$, the claim of Proposition \ref{p2}.
Observe that, for any real number $ z<x $, we can write\begin{align*}
	\pi_{f,r}(x)-\delta(r)\pi(x)&=\sum_{p\le z}Y_p(f)+\sum_{z<p\le x}\mathbbm{1}_p(f)\\&+\left( \sum_{p\le z}\frac{|	\mathscr{X}_{n,r,p}|}{p^n}-\delta(r)\pi(x)\right) .
\end{align*}

The Prime Number Theorem has an error term $O(xe^{-c\sqrt{\log x}})$, which is prohibitive for counting primes in short intervals. We then use the trivial estimate $\sum_{z<p\le x}\mathbbm{1}_p(f)\le x-z$ when $z$ is close to $x$. Pick now $z=x-k$ ($k\le x$). Then$$\sum_{z<p\le x}\mathbbm{1}_p(f)\le k,$$and, by \ref{eq2},\begin{align*}
	\sum_{p\le z}\frac{|	\mathscr{X}_{n,r,p}|}{p^n}-\delta(r)\pi(x)\ll_r|\pi(x)-\pi(z)|+\log\log z\ll k+\log\log z.
\end{align*}

By the above one has\begin{equation}
	\pi_{f,r}(x)-\delta(r)\pi(x)=\sum_{p\le z}Y_p(f)+O_r(k+\log\log z).
	\label{eq3}
\end{equation}

\begin{lemma}\label{l2}
	Uniformly for \textbf{even} natural numbers $ k $ with $$ k\ll_{n,r}\min\left(\left( \frac{z}{\log z\log\log z}\right)^{1/3}, \frac{\log N}{\log z}\right) , $$one has\begin{multline*}
		\mathbb{E}_N\Big( \Big( \sum_{p\le z}Y_p(f)\Big) ^k\Big) =C_{k,r}\pi(z)^{k/2}\left( 1+O_{n,r}\left( \frac{k^3}{(1-\delta(r))^{k/2}}\frac{\log\log z}{\pi(z)}\right) \right) \\+O_n\left( \frac{z^{k(n+1)}}{\log z}N^{-1}\right) .
	\end{multline*}While uniformly for \textbf{odd} natural numbers $ k $ with $$k\ll_{n,r}\min\left( \frac{z}{\log z\log\log z}, \frac{\log N}{\log z}\right),$$one has $$\mathbb{E}_N\Big( \Big( \sum_{p\le z}Y_p(f)\Big) ^k\Big)\ll_{n,r} C_{k,r}\pi(z)^{k/2}k\frac{\log\log z}{\pi(z)^{1/2}}+\frac{z^{k(n+1)}}{\log z}N^{-1},$$as $ z,N\rightarrow+\infty $.
	
	Here$$C_{k,r}=\begin{cases}
		(\delta(r)-\delta(r)^2)^{k/2}	\frac{k!}{2^{k/2}(k/2)!}&\mbox{ for }k\mbox{ even}\\
		\delta(r)^{\frac{k-1}{2}}\frac{k!}{2^{\frac{k-1}{2}}(\frac{k-1}{2})!}&\mbox{ for }k\mbox{ odd}.
	\end{cases}$$
\end{lemma}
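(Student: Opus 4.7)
The plan is to follow the moment-method strategy used by Granville and Soundararajan in \cite{GS}. Expand
\[
\biggl(\sum_{p\le z}Y_p(f)\biggr)^{k}=\sum_{(p_1,\dots,p_k)}Y_{p_1}(f)\cdots Y_{p_k}(f)
\]
and group the $k$-tuples according to the partition $P$ of $\{1,\dots,k\}$ recording which indices share the same prime. For a fixed $P$ with $j$ blocks of sizes $\alpha_1,\dots,\alpha_j$ (so $\sum_i\alpha_i=k$), the corresponding contribution to $\mathbb{E}_N((\sum_p Y_p)^k)$ is the sum over ordered distinct $j$-tuples of primes of $\mathbb{E}_N(Y_{q_1}^{\alpha_1}\cdots Y_{q_j}^{\alpha_j})$, so it suffices to evaluate this joint moment prime-by-prime.

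Since each $Y_q$ is a function of $f\bmod q$ alone, Lemma \ref{l1} combined with the Chinese Remainder Theorem gives, under the assumption $z<N^{1/(kn)}$ (equivalent to $k\ll\log N/\log z$), the near-independence
\[
\mathbb{E}_N\bigl(Y_{q_1}^{\alpha_1}\cdots Y_{q_j}^{\alpha_j}\bigr)=\prod_{i=1}^{j}\mu_{\alpha_i}(q_i)+O_{n,k}\bigl(z^{jn}N^{-1}\bigr),
\]
where $\mu_\alpha(q):=q_r(1-q_r)^\alpha+(1-q_r)(-q_r)^\alpha$ with $q_r=|\mathscr{X}_{n,r,q}|/q^n$ is the $\alpha$-th moment of $Y_q$ in the fully uniform model. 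Using the expansion $q_r=\delta(r)+C_r/q+O(q^{-2})$ of \eqref{eq2}, one checks $\mu_1\equiv 0$ (so $Y_q$ is centered by construction in the uniform model), $\mu_2(q)=\delta(r)(1-\delta(r))+O(1/q)$, and $|\mu_\alpha(q)|\ll\delta(r)$ for every $\alpha\ge 2$. Summing the $N$-dependent error over the at most $\pi(z)^k$ $k$-tuples produces the announced term $O_n(z^{k(n+1)}/\log z\cdot N^{-1})$.

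Because $\mu_1\equiv 0$, only partitions with every block of size at least $2$ contribute to the main term, forcing $j\le k/2$. For $k$ \emph{even}, equality $j=k/2$ is attained exactly by the $k!/(2^{k/2}(k/2)!)$ perfect matchings, each contributing
\[
\bigl(\delta(r)(1-\delta(r))\bigr)^{k/2}\pi(z)^{k/2}\bigl(1+O(k^{2}/\pi(z))+O(k\log\log z/\pi(z))\bigr),
\]
the two corrections coming respectively from the distinctness constraint and from expanding $\prod_i(1+O(1/q_i))$ via $\sum_{q\le z}1/q\ll\log\log z$. These matchings assemble into the leading $C_{k,r}\pi(z)^{k/2}$. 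Partitions with $j<k/2$ blocks contribute $O(\pi(z)^j)$ times constants in $\delta(r)$; comparing with $C_{k,r}\pi(z)^{k/2}$ produces the relative error $O(k^{3}\log\log z/((1-\delta(r))^{k/2}\pi(z)))$, where the factor $(1-\delta(r))^{-k/2}$ appears because such subleading partitions only involve powers of $\delta(r)$ (since $\mu_\alpha\sim q_r$ for large $\alpha$) whereas $C_{k,r}$ carries the full $(\delta(r)(1-\delta(r)))^{k/2}$. For $k$ \emph{odd} no perfect matching exists; the extremal configuration is one block of size $3$ with $(k-3)/2$ pairs, giving $j=(k-1)/2$ and hence $\pi(z)^{(k-1)/2}=\pi(z)^{k/2}\pi(z)^{-1/2}$ in place of $\pi(z)^{k/2}$. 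Pushing the expansion of the $\mu_{\alpha_i}$ to the next order then supplies the extra $k\log\log z$ and delivers the announced upper bound.

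The main obstacle will be the combinatorial bookkeeping: maintaining sharp exponents of $k$, $\log\log z$, and $(1-\delta(r))^{-1}$ uniformly over all admissible $k$, and ensuring that the many non-matching partitions together contribute only an admissible error when normalized against $C_{k,r}\pi(z)^{k/2}$. The analytic step itself is routine once Lemma \ref{l1} has decoupled the joint moment into a product of single-prime quantities.
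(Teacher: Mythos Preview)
Your proposal is correct and follows essentially the same approach as the paper: both expand the $k$-th moment, invoke Lemma~\ref{l1} to reduce $\mathbb{E}_N(Y_a)$ to the product $Y(a)=\prod_{p^\alpha\|a}\mu_\alpha(p)$ plus an $O(A^nN^{-1})$ error, observe that $\mu_1\equiv 0$ forces square-full $a$ (blocks of size $\ge 2$), and then isolate the perfect-matching term $s=k/2$ as the main contribution with the $s=k/2-1$ terms supplying the $k^3/(1-\delta(r))^{k/2}$ correction. The only cosmetic difference is that the paper indexes by square-full integers and multinomial coefficients while you index by set partitions, but these are the same combinatorics.
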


\begin{proof}
	Let $ a=\prod_{i=1}^{s}p_i^{\alpha_i} $, where the $ p_i $ are distinct primes and $ \alpha_i\ge1 $. Let $ A:=\prod_{i=1}^{s}p_i $ be the square-free part of $ a $, and set$$Y_a(f):=\prod_{i=1}^{s}Y_{p_i}(f)^{\alpha_i}.$$
	We may write$$\frac{1}{|\mathscr{P}_{n,N}^{0}|}\sum_{f\in\mathscr{P}_{n,N}^{0}}(\sum_{p\le z}Y_p(f))^k=\sum_{p_1,\dots,p_k\le z}\frac{1}{|\mathscr{P}_{n,N}^{0}|}\sum_{f\in\mathscr{P}_{n,N}^{0}}Y_{p_1\dots p_k}(f).$$Let us then consider more generally $ \frac{1}{|\mathscr{P}_{n,N}^{0}|}\sum_{f\in\mathscr{P}_{n,N}^{0}}Y_a(f) $.
	By definition, for any prime $ p $, $ Y_p(f)=Y_p(g) $ if $ f\equiv g $ mod $ p $; therefore\begin{align*}
		\frac{1}{|\mathscr{P}_{n,N}^{0}|}\sum_{f\in\mathscr{P}_{n,N}^{0}}Y_a(f)&=\frac{1}{|\mathscr{P}_{n,N}^{0}|}\underset{i=1,\dots,s}{\sum_{g_i\tiny{\mbox{ mod }}p_i}}\underset{f\equiv g_i\tiny{\mbox{ mod }}p_i\ \forall i}{\sum_{f\in\mathscr{P}_{n,N}^{0}}}Y_{p_1}(g_1)^{\alpha_1}\dots Y_{p_s}(g_s)^{\alpha_s},
	\end{align*}where the first sum in the right-hand side is over $ g_i\in\Z[X] $ monic, with coefficients in $ \{0,\dots,p_i-1\} $. As long as $ (p_1\dots p_s)^n<N $, the sum is, by Lemma \ref{l1},\begin{multline*}
		\sum_{g_1,\dots,g_s}Y_{p_1}(g_1)^{\alpha_1}\dots Y_{p_s}(g_s)^{\alpha_s}	\frac{1}{|\mathscr{P}_{n,N}^{0}|}\underset{f\equiv g_i\tiny{\mbox{ mod }}p_i\ \forall i}{\sum_{f\in\mathscr{P}_{n,N}^{0}}}1\\
		=\sum_{g_1,\dots,g_s}Y_{p_1}(g_1)^{\alpha_1}\dots Y_{p_s}(g_s)^{\alpha_s}\Big(\frac{1}{(p_1\dots p_s)^n}+O(N^{-1})\Big)\\
		=\frac{1}{A^n}\sum_{g_1,\dots,g_s}Y_{p_1}(g_1)^{\alpha_1}\dots Y_{p_s}(g_s)^{\alpha_s}	+ O\Big(N^{-1}\sum_{g_1,\dots,g_s}1\Big)\\
		=\frac{1}{A^n}\sum_{g_1,\dots,g_s}Y_{p_1}(g_1)^{\alpha_1}\dots Y_{p_s}(g_s)^{\alpha_s}	+ O(A^nN^{-1}),
	\end{multline*}since $ |Y_{p_i}(g_i)^{\alpha_i}|\ll 1 $. Denoting the main term by 
	
	\begin{equation}
		Y(a)=\frac{1}{A^n}\sum_{g_1,\dots,g_s}Y_{p_1}(g_1)^{\alpha_1}\dots Y_{p_s}(g_s)^{\alpha_s},
		\label{eq4}
	\end{equation}

	we have\begin{multline}
		Y(a)=\frac{1}{A^n}\sum_{g_1}Y_{p_1}(g_1)^{\alpha_1}\dots \sum_{g_s}Y_{p_s}(g_s)^{\alpha_s}\\
		=\frac{1}{A^n}\prod_{i=1}^{s}\Big(\sum_{g_i\in X_{n,r,p_i}}\Big(1-\frac{|	\mathscr{X}_{n,r,p_i}|}{p_i^n}\Big)^{\alpha_i}+\sum_{g_i\notin X_{n,r,p_i}}\Big(-\frac{|	\mathscr{X}_{n,r,p_i}|}{p_i^n}\Big)^{\alpha_i}\Big)\\
		=\frac{1}{A^n}\prod_{i=1}^{s}\Big(|	\mathscr{X}_{n,r,p_i}|\Big(1-\frac{|	\mathscr{X}_{n,r,p_i}|}{p_i^n}\Big)^{\alpha_i}+(p_i^{n}-|X_{n,r,p_i}|)\Big(-\frac{|	\mathscr{X}_{n,r,p_i}|}{p_i^n}\Big)^{\alpha_i}\Big)\\
		=\prod_{p^{\alpha}||a}\Big(\frac{|	\mathscr{X}_{n,r,p}|}{p^n}\Big(1-\frac{|	\mathscr{X}_{n,r,p}|}{p^n}\Big)^{\alpha}+\Big(1-\frac{|	\mathscr{X}_{n,r,p}|}{p^n}\Big)\Big(-\frac{|	\mathscr{X}_{n,r,p}|}{p^n}\Big)^{\alpha}\Big)
		\label{eq5},
	\end{multline}using the inductive formula $$\prod_{i=1}^{\ell}(a_i+b_i)=\underset{k+h=\ell}{\underset{j_1<\dots<j_h\in\{1,\dots,\ell\}\setminus\{i_1,\dots,i_k\}}{\sum_{1\le i_1<\dots<i_k\le \ell}}}a_{i_1}\dots a_{i_k}b_{j_1}\dots b_{j_h}.$$

	Thus\begin{equation*}
		\frac{1}{|\mathscr{P}_{n,N}^{0}|}\sum_{f\in\mathscr{P}_{n,N}^{0}}Y_a(f)=Y(a)+O(A^nN^{-1});
	\end{equation*}Observe now that $ Y(a)=0 $ unless $ \alpha_i\ge 2 $ for all $ i=1,\dots,s $. It turns out that\begin{multline}
		\frac{1}{|\mathscr{P}_{n,N}^{0}|}\sum_{f\in\mathscr{P}_{n,N}^{0}}(\sum_{p\le z}Y_p(f))^k=\underset{p_1\dots p_k\ \tiny\mbox{square-full}}{\sum_{p_1,\dots,p_k\le z}}Y(p_1\dots p_k)\\+O\Big(\sum_{p_1,\dots,p_k\le z}(p_1\dots p_k)^nN^{-1}\Big)\\
		=\underset{p_1\dots p_k\ \tiny\mbox{square-full}}{\sum_{p_1,\dots,p_k\le z}}Y(p_1\dots p_k)+O_n\left( \frac{z^{k(n+1)}}{\log z}N^{-1}\right) .
		\label{eq6}
	\end{multline}Let $ q_1<\dots<q_s $ be the distinct primes in $ p_1\dots p_k $. Since $ p_1\dots p_k $ is square-full, we have $ s\le k/2 $. The main term above is\begin{equation}
		\sum_{s\le k/2}\sum_{q_1<\dots<q_s\le z}\underset{\alpha_1+\dots+\alpha_s=k}{\sum_{\alpha_1,\dots,\alpha_s\ge2}}\binom{k}{\alpha_1,\dots,\alpha_s}Y(q_1^{\alpha_1}\dots q_s^{\alpha_s}).
		\label{eq7}
	\end{equation}At this point, we have to divide into two cases, since if $ k $ is even there is a term $ s=k/2 $ with all $ \alpha_i=2 $. This main term contributes\begin{multline*}
		\frac{k!}{2^{k/2}(k/2)!}\underset{q_j\tiny{\mbox{ distinct}}}{\sum_{q_1,\dots,q_{k/2}\le z}}Y(q_1^2\dots q_{k/2}^2)\\
		=\frac{k!}{2^{k/2}(k/2)!}\underset{q_j\tiny{\mbox{ distinct}}}{\sum_{q_1,\dots,q_{k/2}\le z}}\prod_{i=1}^{k/2}\frac{|	\mathscr{X}_{n,r,q_i}|}{q_i^n}\Big(1-\frac{|\mathscr{X}_{n,r,q_i}|}{q_i^n}\Big).
	\end{multline*}Now,$$\underset{q_j\tiny{\mbox{ distinct}}}{\sum_{q_1,\dots,q_{k/2}\le z}}\prod_{i=1}^{k/2}\frac{|	\mathscr{X}_{n,r,q_i}|}{q_i^n}\Big(1-\frac{|	\mathscr{X}_{n,r,q_i}|}{q_i^n}\Big)\le\Big(\sum_{p\le z}\frac{|	\mathscr{X}_{n,r,p}|}{p^n}\Big(1-\frac{|	\mathscr{X}_{n,r,p}|}{p^n}\Big)\Big)^{k/2}.$$On the other hand, by induction,\begin{multline*}
		\underset{q_j\tiny{\mbox{ distinct}}}{\sum_{q_1,\dots,q_{k/2}\le z}}\prod_{i=1}^{k/2}\frac{|	\mathscr{X}_{n,r,q_i}|}{q_i^n}\Big(1-\frac{|	\mathscr{X}_{n,r,q_i}|}{q_i^n}\Big)\\
		=\underset{q_j\tiny{\mbox{ distinct}}}{\sum_{q_1,\dots,q_{k/2-1}\le z}}\prod_{i=1}^{k/2-1}\frac{|	\mathscr{X}_{n,r,q_i}|}{q_i^n}\Big(1-\frac{|	\mathscr{X}_{n,r,q_i}|}{q_i^n}\Big)\underset{q_{k/2}\neq q_j\ \forall j}{\sum_{q_{k/2}\le z}}\frac{|	\mathscr{X}_{n,r,q_{k/2}}|}{q_{k/2}^n}\Big(1-\frac{|	\mathscr{X}_{n,r,q_{k/2}}|}{q_{k/2}^n}\Big)\\
		\ge\underset{q_j\tiny{\mbox{ distinct}}}{\sum_{q_1,\dots,q_{k/2-1}\le z}}\prod_{i=1}^{k/2-1}\frac{|	\mathscr{X}_{n,r,q_i}|}{q_i^n}\Big(1-\frac{|	\mathscr{X}_{n,r,q_i}|}{q_i^n}\Big)\sum_{\pi_{k/2}\le p\le z}\frac{|	\mathscr{X}_{n,r,p}|}{p^n}\Big(1-\frac{|	\mathscr{X}_{n,r,p}|}{p^n}\Big)\\
		\ge\dots\ge \sum_{ 2\le p\le z}\frac{|	\mathscr{X}_{n,r,p}|}{p^n}\Big(1-\frac{|	\mathscr{X}_{n,r,p}|}{p^n}\Big)\dots\sum_{\pi_{k/2}\le p\le z}\frac{|	\mathscr{X}_{n,r,p}|}{p^n}\Big(1-\frac{|	\mathscr{X}_{n,r,p}|}{p^n}\Big)\\
		\ge \Big(\sum_{\pi_{k/2}\le p\le z}\frac{|	\mathscr{X}_{n,r,p}|}{p^n}\Big(1-\frac{|	\mathscr{X}_{n,r,p}|}{p^n}\Big)\Big)^{k/2},
	\end{multline*}where $\pi_n$ denotes the $n$-th smallest prime. The first inequality is due to the observation that if a prime $p$ is greater than the $n$-th smallest prime, then there are $n-1$ primes $p_1,\dots,p_{n-1}$ so that $p\neq p_i$ for all $i=1,\dots,n-1$.

	By (1)\begin{align*}
		\sum_{p\le z}\frac{|	\mathscr{X}_{n,r,p}|}{p^n}\Big(1-\frac{|	\mathscr{X}_{n,r,p}|}{p^n}\Big)&=(\delta(r)-\delta(r)^2)\pi(z)+O(\log\log z),\\
		\sum_{\pi_{k/2}\le p\le z}\frac{|	\mathscr{X}_{n,r,p}|}{p^n}\Big(1-\frac{|	\mathscr{X}_{n,r,p}|}{p^n}\Big)&=(\delta(r)-\delta(r)^2)\pi(z)+O(\log\log z+k).
	\end{align*}The main term in (2) is then\begin{multline}
		\frac{k!}{2^{k/2}(k/2)!}((\delta(r)-\delta(r)^2)\pi(z)+O(\log\log z+k))^{k/2}\\
		=\frac{k!}{2^{k/2}(k/2)!}(\delta(r)-\delta(r)^2)^{k/2}(\pi(z)^{k/2}+O(k^2\pi(z)^{k/2-1}\log\log z)).
		\label{eq8}
	\end{multline}We have now to estimate the error term in (2), for $ s=k/2-1$. Note that  $Y(q_1^{\alpha_1}\dots q_s^{\alpha_s})\le Y(q_1^{2}\dots q_s^{2})$ for $\alpha_i\ge2$. By \ref{eq5} we have $$ Y(q_1^{\alpha_1}\dots q_s^{\alpha_s})\le\frac{|	\mathscr{X}_{n,r,q_1}|\dots|	\mathscr{X}_{n,r,q_s}|}{(q_1\dots q_s)^n} .$$Therefore
	\begin{multline}
		\sum_{q_1<\dots<q_s\le z}\underset{\alpha_1+\dots+\alpha_s=k}{\sum_{\alpha_1,\dots,\alpha_s\ge2}}\binom{k}{\alpha_1,\dots,\alpha_s}Y(q_1^{\alpha_1}\dots q_s^{\alpha_s})\\
		\le\frac{k!}{(k/2-1)!}\Big(\sum_{q\le z}\frac{|	\mathscr{X}_{n,r,q}|}{q^n}\Big)^{k/2-1}\underset{\alpha_1+\dots+\alpha_{k/2-1}=k}{\sum_{\alpha_1,\dots,\alpha_{k/2-1}\ge 2}}\frac{1}{\alpha_1!\dots\alpha_{k/2-1}!}\\
		\le\frac{k!}{2^{k/2-1}(k/2-1)!}\binom{k/2}{k/2-2}(\delta(r)\pi(z)+O(\log\log z))^{k/2-1} \\
		\ll \frac{k!}{2^{k/2}(k/2)!} k^3\left( \delta(r)^{k/2-1}\pi(z)^{k/2-1}+k\pi(z)^{k/2-2}\log\log z\right)\\
		\ll \frac{k!}{2^{k/2}(k/2)!} k^3\delta(r)^{k/2-1}\pi(z)^{k/2-1}. 
		\label{eq9}
	\end{multline}For the last inequality, we use the fact that the number of tuples of integers $ (\alpha_1,\dots,\alpha_{k/2-1}) $, $ \alpha_i\ge 2 $ such that $ \sum\alpha_i=k $ is the number of sequences $ (\alpha_1',\dots,\alpha_{k/2-1}') $, $ \alpha_i'\ge 1 $ such that $ \sum\alpha_i=k/2+1 $, that is the number of strong compositions of $ k/2+1 $ into $ k/2-1 $ parts, which is $ \binom{k/2}{k/2-2} $. Thus, for $ k $ even, by combining \ref{eq6}, \ref{eq8} and \ref{eq9},\begin{multline*}
		\frac{1}{|\mathscr{P}_{n,N}^{0}|}\sum_{f\in\mathscr{P}_{n,N}^{0}}(\sum_{p\le z}Y_p(f))^k\\
		= \frac{k!}{2^{k/2}(k/2)!}(\delta(r)-\delta(r)^2)^{k/2}(\pi(z)^{k/2}+O_{n,r}(k^2\pi(z)^{k/2-1}\log\log z\\+\frac{k^3}{(1-\delta(r))^{k/2}}\pi(z)^{k/2-1}))
		+O_n\left( \frac{z^{k(n+1)}}{\log z}N^{-1}\right) \\
		=\frac{k!}{2^{k/2}(k/2)!}(\delta(r)-\delta(r)^2)^{k/2}\pi(z)^{k/2}\left( 1+O_{n,r}\left( \frac{k^3}{(1-\delta(r))^{k/2}}\frac{\log\log z}{\pi(z)}\right) \right) \\+O_n\left( \frac{z^{k(n+1)}}{\log z}N^{-1}\right).
	\end{multline*}Finally, for $ k $ odd, we have the estimate for the term with $ s=k/2-1/2 $ as for the previous case, obtaining\begin{multline*}
	\frac{1}{|\mathscr{P}_{n,N}^{0}|}\sum_{f\in\mathscr{P}_{n,N}^{0}}\left( \sum_{p\le z}Y_p(f)\right)  ^k\\
	\ll \frac{k!}{2^{\frac{k-1}{2}}(\frac{k-1}{2})!}k\left(\delta(r)^{\frac{k-1}{2}}\pi(z)^{\frac{k-1}{2}}+k\pi(z)^{\frac{k-3}{2}}\log\log z\right) \\+\frac{z^{k(n+1)}}{\log z}N^{-1}\\
	\ll_{n,r} C_{k,r}\pi(z)^{k/2}k\frac{\log\log z}{\pi(z)^{1/2}}+\frac{z^{k(n+1)}}{\log z}N^{-1}.
\end{multline*}
\end{proof}
\begin{prop}\label{p2} Let $x=N^{1/\log\log N}$.
	Uniformly for \textbf{even} natural numbers $ k $ with $ k\ll_{n,r}\log\log N $, one has\begin{multline*}
		\mathbb{E}_N((\pi_{f,r}(x)-\delta(r)\pi(x))^k)\\=C_{k,r}\pi(x)^{k/2}\left( 1+O_{n,r}\left( \frac{1}{(1-\delta(r))^{k/2}}\frac{\log\log x}{\pi(x)^{1/2}}\right) \right).
	\end{multline*}While uniformly for \textbf{odd} natural numbers $ k $ with $k\ll_{n,r}\log\log N$, one has$$\mathbb{E}_N((\pi_{f,r}(x)-\delta(r)\pi(x))^k)\ll_{n,r} C_{k,r}\pi(x)^{k/2}k\frac{\log\log x}{\pi(x)^{1/2}},$$as $ x,N\rightarrow+\infty $.
\end{prop}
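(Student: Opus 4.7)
The plan is to derive the proposition from Lemma \ref{l2} by exploiting the decomposition \ref{eq3} together with a binomial expansion of the $k$-th power. In \ref{eq3} I take the interval-length parameter (there denoted $k$, which collides with the moment exponent) to be $0$, so that $z = x$, giving
$$\pi_{f,r}(x) - \delta(r)\pi(x) = S(f) + E(f), \quad S(f) := \sum_{p \le x} Y_p(f), \quad |E(f)| \le C_r \log\log x,$$
uniformly in $f \in \mathscr{P}_{n,N}^0$. With the choice $x = N^{1/\log\log N}$ one has $\log x = \log N/\log\log N$, so the constraint $k \ll \log N/\log z$ from Lemma \ref{l2} becomes exactly $k \ll \log\log N$, matching the hypothesis of the proposition; the other constraints in Lemma \ref{l2} (involving $(z/\log z\log\log z)^{1/3}$, etc.) are automatically much weaker in this range.

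I then expand
$$\mathbb{E}_N[(S+E)^k] = \sum_{j=0}^{k} \binom{k}{j}\, \mathbb{E}_N[S^{k-j}] \cdot O_r((\log\log x)^j),$$
and estimate each $\mathbb{E}_N[S^{k-j}]$ by the appropriate case of Lemma \ref{l2}, chosen according to the parity of $k-j$. The $j=0$ term produces the main contribution: for $k$ even, the even case of the lemma gives $C_{k,r}\pi(x)^{k/2}(1 + O(\cdots))$ with a multiplicative error strictly smaller than the one stated in the proposition; for $k$ odd, the odd case directly yields the claimed bound $\ll C_{k,r}\pi(x)^{k/2}\cdot k\log\log x/\pi(x)^{1/2}$.

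The corrections $j \ge 1$ must be absorbed into the proposition's error. For $k$ even, the dominant correction is $j = 1$: it invokes the odd case of Lemma \ref{l2} at exponent $k-1$ and produces a contribution of order $k^2 C_{k-1,r}(\log\log x)^2\pi(x)^{(k-2)/2}$. A direct computation of the ratio $C_{k-1,r}/C_{k,r} \asymp (\delta(r)(1-\delta(r))^{k/2})^{-1}$ then yields the announced factor $(1-\delta(r))^{-k/2}$ in the error, with the overall size controlled by $k\log\log x \ll \pi(x)^{1/2}$ in our range. For $k$ odd, a symmetric analysis applying the even case of the lemma at exponent $k-1$ shows that the $j\ge1$ corrections are dominated by the $j=0$ term, with $C_{k-1,r}/C_{k,r} \asymp (1-\delta(r))^{(k-1)/2}/k$. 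Finally, the arithmetic error $z^{k(n+1)}N^{-1}/\log z$ inherited from Lemma \ref{l2} is bounded by $N^{k(n+1)/\log\log N - 1}$, which is $o(\pi(x)^{k/2})$ once the implicit constant in $k \ll_{n,r} \log\log N$ is taken smaller than $1/(n+1)$; this is the reason the proposition's implicit constant depends on $n$. The main obstacle is the combinatorial bookkeeping across the even/odd parity split when computing the ratios $C_{k-j,r}/C_{k,r}$, but no new analytic ingredient beyond Lemma \ref{l2} is needed.
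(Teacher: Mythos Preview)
Your proposal is correct and follows essentially the same route as the paper: expand $(\pi_{f,r}(x)-\delta(r)\pi(x))^k$ via the binomial theorem using the decomposition \ref{eq3}, and feed the resulting moments $\mathbb{E}_N[S^{k-j}]$ into the appropriate case of Lemma \ref{l2}. Your choice $z=x$ (rather than the paper's $z=x-k$) is a mild simplification, since it makes the remainder $E$ a deterministic constant and lets you bound each cross term by $|E|^j\,|\mathbb{E}_N[S^{k-j}]|$ directly, bypassing the Cauchy--Schwarz step the paper inserts to control $\mathbb{E}_N[|S|^{k-1}]$.
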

\begin{proof}
	For $ z=x-k $ we obtained in \ref{eq3},$$\pi_{f,r}(x)-\delta(r)\pi(x)=\sum_{p\le z}Y_p(f)+O_r(k+\log\log z) .$$In particular, $$
	(\pi_{f,r}(x)-\delta(r)\pi(x))^k=\left( \sum_{p\le z}Y_p(f)\right) ^k$$\begin{equation}+O\left( \sum_{j=0}^{k-1}(k+\log\log z) ^{k-j}\binom{k}{j}\left| \sum_{p\le z}Y_p(f)\right| ^j\right) 
		\label{eq10}
	\end{equation}The dominant term in the error is obtained for $ j=k-1 $. If $k$ is even, we apply Lemma \ref{l2} to (3) and we get\begin{multline*}
		\frac{1}{|\mathscr{P}_{n,N}^{0}|}\sum_{f\in\mathscr{P}_{n,N}^{0}}(\pi_{f,r}(x)-\delta(r)\pi(x))^k\\
		=C_{k,r}\pi(x-k)^{k/2}\left( 1+O_{n,r}\left( \frac{k^3}{(1-\delta(r))^{k/2}}\frac{\log\log(x-k)}{\pi(x-k)}+k^3\frac{C_{k-1,r}}{C_{k,r}}\frac{(\log\log(x-k))^2}{\pi(x-k)}\right) \right)\\+O_n\left( \frac{(x-k)^{k(n+1)}}{\log(x-k)}N^{-1}\right) \\
		=C_{k,r}\pi(x)^{k/2}\left( 1+O_{n,r}\left( \frac{k^3}{(1-\delta(r))^{k/2}}\frac{\log\log x}{\pi(x)}+\frac{k^3(\log\log x)^2}{\pi(x)} +\frac{k^2}{\pi(x)}\right)\right) 
		 \\+O_n\left( \frac{x^{k(n+1)}}{\log x}N^{-1}\right) ,
	\end{multline*}
	since\begin{align*}
		&\pi(x-k)=\pi(x)-\sum_{x-k<p<x}1=\pi(x)+O(k)\\
		\Longrightarrow&\pi(x-k)^{k/2}=\pi(x)^{k/2}+O(\pi(x)^{k/2-1}k^2),
	\end{align*}and$$\frac{C_{k-1,r}}{C_{k,r}}\ll_r1.$$ The last error term is negligible if $k\ll\log\log N$.

	If $k$ is odd, we can handle it using the Cauchy-Schwartz inequality:\begin{multline*}
		\frac{1}{|\mathscr{P}_{n,N}^{0}|}\sum_{f\in\mathscr{P}_{n,N}^{0}}\Big|\sum_{p\le z}Y_p (f)\Big|^{k-1}\\
		\le\Big(\frac{1}{|\mathscr{P}_{n,N}^{0}|}\sum_{f\in\mathscr{P}_{n,N}^{0}}\Big|\sum_{p\le z}Y_p(f)\Big|^{k-2}\Big)^{1/2}\Big(\frac{1}{|\mathscr{P}_{n,N}^{0}|}\sum_{f\in\mathscr{P}_{n,N}^{0}}\Big|\sum_{p\le z}Y_p(f)\Big|^{k}\Big)^{1/2}.
	\end{multline*}Lemma \ref{l2} leads to$$\frac{1}{|\mathscr{P}_{n,N}^{0}|}\sum_{f\in\mathscr{P}_{n,N}^{0}}\Big|\sum_{p\le z}Y_p(f)\Big|^{k-1}\ll_{n,r}(C_{k-2,r}C_{k,r})^{1/2}k\pi(z)^{\frac{k}{2}-1}\log\log z+\frac{x^{k(n+1)}}{\log x}N^{-1}.$$Since$$\frac{(C_{k-2,r}C_{k,r})^{1/2}}{C_{k,r}}\binom{k}{k-1}\asymp k^{1/2},$$we obtain from (3), the condition $k\ll\log\log N$, and the assumption on $x$,
	\begin{multline*}
		\frac{1}{|\mathscr{P}_{n,N}^{0}|}\sum_{f\in\mathscr{P}_{n,N}^{0}}(\pi_{f,r}(x)-\delta(r)\pi(x))^k\ll_{n,r} C_{k,r}\pi(x)^{k/2}\log\log x\\
		\left( \frac{k}{\pi(x)^{1/2}}+\frac{k^{5/2}\log\log x}{\pi(x)}\right) +\frac{x^{k(n+1)}}{\log x}N^{-1}\\
		\ll_{n,r} C_{k,r}\pi(x)^{k/2}k\frac{\log\log x}{\pi(x)^{1/2}}.
	\end{multline*}

\end{proof}

\section{Proof of the main theorem}

\begin{thm}\label{t1}
	For $ x\le N^{1/\log\log N} $ and for any $ a<b\in\R $,$$\p_N\left(a\le \frac{\pi_{f,r}(x)-\delta(r)\pi(x)}{(\delta(r)-\delta(r)^2)^{1/2}\pi(x)^{1/2}}\le b\right) \longrightarrow\frac{1}{\sqrt{2\pi}}\int_{a}^{b}e^{-t^2/2}dt,$$as $ N\rightarrow+\infty $.
\end{thm}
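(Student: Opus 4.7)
The plan is to apply the classical method of moments, using Proposition \ref{p2} as input. Set $x = N^{1/\log\log N}$ and define the normalised random variable
\[ Z_N(f) := \frac{\pi_{f,r}(x) - \delta(r)\pi(x)}{(\delta(r)-\delta(r)^2)^{1/2}\,\pi(x)^{1/2}}. \]
The strategy is to prove that, for every fixed non-negative integer $k$,
\[ \mathbb{E}_N(Z_N^k) \longrightarrow m_k := \begin{cases} \frac{k!}{2^{k/2}(k/2)!} & \text{if $k$ is even,} \\ 0 & \text{if $k$ is odd,} \end{cases} \]
i.e.\ the moments of a standard Gaussian, and then to conclude via the Fr\'echet--Shohat theorem, since the normal law is uniquely determined by its moments (Carleman's condition is satisfied).

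Verifying the moment convergence is a direct substitution. For even $k$, Proposition \ref{p2} gives
\[ \mathbb{E}_N\bigl((\pi_{f,r}(x) - \delta(r)\pi(x))^k\bigr) = C_{k,r}\,\pi(x)^{k/2}\bigl(1+o(1)\bigr); \]
dividing by $(\delta(r)-\delta(r)^2)^{k/2}\pi(x)^{k/2}$ and recalling $C_{k,r}=(\delta(r)-\delta(r)^2)^{k/2}\frac{k!}{2^{k/2}(k/2)!}$ at once yields $\mathbb{E}_N(Z_N^k) = m_k + o(1)$ as $N\to\infty$. For odd $k$, the upper bound of Proposition \ref{p2} shows that the centred $k$-th moment is $\ll_{n,r} C_{k,r}\pi(x)^{k/2}\cdot k\log\log x/\pi(x)^{1/2}$; since the ratio $C_{k,r}/(\delta(r)-\delta(r)^2)^{k/2}$ is a fixed $k$-dependent constant, the normalised moment is $\ll_{n,r,k} \log\log x/\pi(x)^{1/2} \to 0 = m_k$.

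Once moment convergence is established for every fixed $k$, Fr\'echet--Shohat then delivers weak convergence of $Z_N$ to a standard Gaussian, and since every real number is a continuity point of the Gaussian cdf, this produces the claimed pointwise convergence
\[ \p_N(a \le Z_N \le b) \longrightarrow \frac{1}{\sqrt{2\pi}}\int_a^b e^{-t^2/2}\,dt. \]
The substantive analytic work has already been carried out in Lemma \ref{l2} and Proposition \ref{p2}, so the CLT is essentially a formal consequence. The one technical point worth flagging is that Proposition \ref{p2} is stated uniformly for $k \ll_{n,r}\log\log N$, whereas the method of moments needs only fixed $k$ as $N\to\infty$, the easiest regime; so no further obstacle arises, and the restriction $x \le N^{1/\log\log N}$ in the hypothesis is exactly what makes the underlying moment computation valid.
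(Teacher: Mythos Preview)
Your proposal is correct and matches the paper's proof essentially line for line: both invoke the method of moments, feed in Proposition~\ref{p2} to verify that the normalised $k$-th moments converge to the Gaussian moments $\mu_k$ (treating even and odd $k$ separately exactly as you do), and then appeal to the fact that the normal distribution is determined by its moments. The only cosmetic difference is that the paper cites Feller for the moment-convergence criterion where you cite Fr\'echet--Shohat/Carleman.
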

\begin{proof}
Firstly, note that it is equivalent to say that$$\p_N\left( \frac{\pi_{f,r}(x)-\delta(r)\pi(x)}{(\delta(r)-\delta(r)^2)^{1/2}\pi(x)^{1/2}}\le b\right) \longrightarrow\Phi(b)$$as $ N\rightarrow+\infty $, where $\Phi(b)=\frac{1}{\sqrt{2\pi}}\int_{-\infty}^{b}e^{-t^2/2}dt$. We use the method of moments. Since the function $ \Phi $ is determined by its moments$$\mu_k=\int_{-\infty}^{+\infty}x^kd\Phi(x),$$if a family of distribution functions $ F_n $ satisfies $\int_{-\infty}^{+\infty}x^kdF_n(x)\rightarrow\mu_k$ for all $ k\ge 1 $, then $ F_n(x)\rightarrow\Phi(x) $ pointwise (see \cite{Fel}, p. 262). On the other hand, if $ F_n(x)\rightarrow\Phi(x) $ for each $ x $ and if $ \int_{-\infty}^{+\infty}|x|^{k+\varepsilon}dF_n(x) $ is bounded in $ n $ for some $ \varepsilon>0 $, then $\int_{-\infty}^{+\infty}x^kdF_n(x)\rightarrow\mu_k$ (\cite{Fel}, page 245).

So the theorem will follow by the method of moments if we prove that for $ k\ge1 $,$$\mathbb{E}_N\left( \frac{(\pi_{f,r}(x)-\delta(r)\pi(x))^k}{((\delta(r)-\delta(r)^2)^{1/2}\pi(x)^{1/2})^k}\right)$$converges to $ \mu_k $ as $ N\rightarrow+\infty $. 

It is well known that$$\mu_k=\frac{1}{\sqrt{2\pi}}\int_{-\infty}^{+\infty}x^ke^{-x^2/2}dx=\begin{cases}
	\frac{k!}{2^{k/2}(k/2)!}&\mbox{if }k\mbox{ is even},\\
	0&\mbox{if }k\mbox{ is odd}.
\end{cases}$$From Proposition \ref{p2}, if we fix $k\ge1$, we see exactly that$$\frac{1}{|\mathscr{P}_{n,N}^{0}|}\sum_{f\in\mathscr{P}_{n,N}^{0}}\left( \frac{(\pi_{f,r}(x)-\delta(r)\pi(x))^k}{((\delta(r)-\delta(r)^2)^{1/2}\pi(x)^{1/2})^k}\right)\underset{x\rightarrow+\infty}{\longrightarrow}	\frac{k!}{2^{k/2}(k/2)!}=\mu_k$$if $k$ is even, and$$\frac{1}{|\mathscr{P}_{n,N}^{0}|}\sum_{f\in\mathscr{P}_{n,N}^{0}}\left( \frac{(\pi_{f,r}(x)-\delta(r)\pi(x))^k}{((\delta(r)-\delta(r)^2)^{1/2}\pi(x)^{1/2})^k}\right)\ll_{k,r}\frac{\log\log x}{\pi(x)^{1/2}}\underset{x\rightarrow+\infty}{\longrightarrow}0=\mu_k$$if $k$ is odd.
\end{proof}

\section{Applications}

\subsection{Discriminant and average of ramified primes}

Let $f$ be an $S_n$-polynomial and let $d_f$ be its discriminant. We are going to discuss the relation between the number of primes $p$ dividing $d_f$ and the discriminant of its field $\Q_f/\Q$.\\

For a polynomial $ f\in \mathscr{P}_{n,N} $, the bound$$d_f\ll N^{2n-2}$$holds, since $ d_f $ is given by the $ (2n-1) $-dimensional determinant$$d_f=(-1)^{n(n-1)/2}\det\begin{pmatrix}
	1&a_{n-1}&a_{n-2}&\cdots&a_0&0&\cdots\\
	0&a_n&a_{n-1}&\cdots&a_1&a_1&\cdots\\
	\vdots&&&&&&\vdots\\
	0&\cdots&0&a_n&\cdots&a_1&a_0\\
	n&(n-1)a_{n-1}&(n-2)a_{n-2}&\cdots&0&0&\cdots\\
	\vdots&&&&&&\vdots\\
	0&\cdots&\cdots&0&na_n&\cdots&a_1
\end{pmatrix}$$with $ a_n=1 $ in our case. However, it turns out (see \cite{GZ}, Corollary 2.2) that\begin{equation}
 d_f\asymp N^{2n-2}
 \label{eq12}
\end{equation}
for almost all $ f $. Indeed, for all $ \varepsilon>0 $ there exists $ \delta=\delta(n,\varepsilon) $ s.t. for $ N $ large enough$$
\p_N(|d_f|>\delta N^{2n-2})>1-\varepsilon.
$$

By the primitive element theorem, we know there is an integral element $\theta\in\mathcal{O}_{K_f}$ so that $K_f=\Q(\theta)$. Let $f_{\theta}\in \Z[X]$ be the minimal polynomial of $\theta$. Then it holds the following relation between the discriminant of $f_{\theta}$ and the discriminant $D_f$ of the number field extension $K_f/\Q$:$$d_{f_{\theta}}=a_{f_{\theta}}^2\cdot D_f,$$where $a_{f_{\theta}}\in\Z$ (see \cite{La}, Chapter III).\\
Now, let $\alpha$ be a root of $f\in\mathscr{P}_{n,N}^{0}$ and consider the extension $\Q_f=\Q(\alpha)$ generated by $\alpha$ over $\Q$. $$\begin{tikzpicture}
	\matrix (m) [matrix of math nodes,row sep=1em,column sep=0.001em,minimum width=2em]
	{K_f\\
		\Q_f\\
		\Q\\};
	\path[-]
	(m-1-1) edge node [right] {$\scriptstyle{(n-1)!}$} (m-2-1)
	(m-2-1) edge node [right] {$\scriptstyle{n}$} (m-3-1)
	;
\end{tikzpicture}$$By the transitivity of the discriminant in towers of extensions, one has$$D_f=D_{\Q_f}^{(n-1)!}N_{\Q_f/\Q}\mathfrak{D}_{K_f/\Q_f},$$where $\mathfrak{D}_{K_f/\Q_f}$ is the ideal discriminant of the extension $K_f/\Q_f$. By the above, \begin{equation}
d_f=a_{f}^2\cdot D_{\Q_f},
\label{eq11}
\end{equation}with $a_f\in\Z$. It turns out the relation between $d_f$ and $D_f$:$$
	d_f=D_f^{1/(n-1)!}a_f^2(N_{\Q_f/\Q}\mathfrak{D}_{K_f/\Q_f})^{-1/(n-1)!}.
$$

As in Proposition 6.4 of \cite{ABZ}, we see that the probability that a monic, irreducible, degree $ n $ polynomial with height $ \le N $ has discriminant coprime with $p $ is $ 1-\frac{1}{p} $, hence$$\frac{|\{f\in\mathscr{P}_{n,N}^{\tiny\mbox{irr}}:p|d_f\}|}{|\mathscr{P}_{n,N}^{\tiny\mbox{irr}}|}\underset{N\rightarrow+\infty}{\longrightarrow}\frac{1}{p}.$$

\begin{coroll}
	The average of the number of ramified primes in $\Q_f/\Q$ is$$\mathbb{E}_N(|\{p:p|D_{\Q_f}\}|)\le\log\log N + O_n(1),$$for sufficiently large $N$.
\end{coroll}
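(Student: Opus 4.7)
The plan is to bound
$$\mathbb{E}_N|\{p:p\mid d_f\}|\le\log\log N+O_n(1),$$
from which the corollary follows via equation \ref{eq11}: every prime ramifying in $\Q_f/\Q$ divides $D_{\Q_f}$, hence $d_f$. I would set $x_0:=N^{1/n}$, the largest threshold permitted by the hypothesis of Lemma \ref{l1}, and split
$$|\{p:p\mid d_f\}|=|\{p\le x_0:p\mid d_f\}|+|\{p>x_0:p\mid d_f\}|.$$

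For the tail $p>x_0$, I would use the deterministic estimate $|d_f|\ll_n N^{2n-2}$ recalled at the beginning of Section 5.1. Any non-zero integer $m$ has at most $\log|m|/\log x_0$ distinct prime factors exceeding $x_0$, so
$$|\{p>x_0:p\mid d_f\}|\le\frac{\log|d_f|}{\log x_0}\le\frac{(2n-2)\log N+O_n(1)}{(1/n)\log N}=O_n(1)$$
uniformly in $f\in\mathscr{P}_{n,N}^0$, with no averaging required.

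For the head $p\le x_0$, the condition $p\mid d_f$ is equivalent to $f\bmod p$ being non-squarefree in $\mathbb{F}_p[X]$, a property satisfied by exactly $p^{n-1}$ of the $p^n$ monic residues of degree $n$. The proof of Lemma \ref{l1} with $k=1$ never uses that $g\in\mathscr{X}_{n,r,p}$: it applies verbatim to any monic residue class $g\bmod p$, yielding $\p_N(f\equiv g\bmod p)=1/p^n+O(N^{-1})$ whenever $p<N^{1/n}$. Summing over the $p^{n-1}$ non-squarefree residues gives $\p_N(p\mid d_f)=1/p+O(p^{n-1}/N)$, and Mertens' theorem together with the prime number theorem then deliver
$$\sum_{p\le N^{1/n}}\p_N(p\mid d_f)=\log\log N^{1/n}+O(1)+O\!\left(\frac{1}{\log N}\right)=\log\log N+O_n(1).$$
Combining the two ranges gives the stated bound. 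No step is a genuine obstacle; the only point of finesse is the choice of $x_0$, forced from below by the hypothesis of Lemma \ref{l1} and happily preserving the optimal leading constant $1$ in front of $\log\log N$ — a larger $x_0$ would fall outside the lemma's range, while a smaller one would make the tail contribute more than $O_n(1)$.
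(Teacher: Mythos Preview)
Your proof is correct and follows essentially the same route as the paper: split at $x_0=N^{1/n}$, use Lemma~\ref{l1} on each of the $p^{n-1}$ non-squarefree residue classes to get $\p_N(p\mid d_f)=1/p+O(p^{n-1}N^{-1})$ for small primes, sum via Mertens, and handle the tail separately before invoking \eqref{eq11}. Your treatment of the tail---bounding the number of prime factors of $d_f$ exceeding $x_0$ by $\log|d_f|/\log x_0 = O_n(1)$ via the deterministic bound $|d_f|\ll_n N^{2n-2}$---is in fact more explicit than the paper's, which simply asserts the $O_n(1)$ contribution without justification.
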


\begin{proof}
	Since almost all polynomials in $ \mathscr{P}_{n,N} $ are irreducible, with error term $ O(N^{-1}) $, and since $|\mathscr{P}_{n,N}^{0}|=(2N)^{n}+O(N^{n-1})$, we also have that$$\frac{|\{f\in\mathscr{P}_{n,N}^{0}:p|d_f\}|}{|\mathscr{P}_{n,N}^{0}|}=\frac{1}{p}+o(1),$$for $N$ large enough. In particular, for the primes $p<N^{1/n}$, we can also write down explicitly the error term by applying Lemma \ref{l1}. By considering the number of possibilities for the coefficient of a polynomial $g\in\mathbb{F}_p[X]$ with a double root, a simple argument shows that there are $p^{n-1}$ such monic, degree-$n$ polynomials. Therefore,\begin{align*}
		\p_N(f\in\mathscr{P}_{n,N}^{0}:p|d_f)&=\frac{1}{|\mathscr{P}_{n,N}^{0}|}\underset{g\tiny\mbox{ double root}}{\underset{\tiny\mbox{monic, }\deg g=n}{\sum_{g\in\mathbb{F}_{p}[X]}}}\ \underset{f\equiv g\tiny\mbox{ mod }p}{\sum_{f\in \mathscr{P}_{n,N}^{0}}}1\\
		&=p^{n-1}\left( \frac{1}{p^n}+O(N^{-1})\right) \\
		&=\frac{1}{p}+O(p^{n-1}N^{-1}),
	\end{align*}as long as $p<N^{1/n}$. It follows that\begin{align*}
		\mathbb{E}_N(|\{p:p|d_f\}|)&=\sum_{ p<N^{1/n}}\frac{1}{|\mathscr{P}_{n,N}^{0}|}\underset{p|d_f}{\sum_{f\in \mathscr{P}_{n,N}^{0}}}1+O\Big(\frac{1}{|\mathscr{P}_{n,N}^{0}|}\sum_{f\in\mathscr{P}_{n,N}^0}\underset{p|d_f}{\sum_{ p\ge N^{1/n}}}1\Big)\\
		&=\log\log N+O_{n}(1).
	\end{align*}
	From (4), one has the claim.
\end{proof}

For a prime $p$, the ring $\Z[\alpha]$ is called $p$\textbf{-maximal} if $p$ is not a divisor of the index of $\Z[\alpha]$ in $\mathcal{O}_{K_1}$. In particular $\Z[\alpha]$ is not $p$-maximal if and only if $p|(d_f/D_{\Q_f})$.\\
There is an equivalent condition for $\Z[\alpha]$ to be $p$-maximal.

\begin{namedthm}{Theorem}[\cite{ABZ}, Corollary 3.2]
	The ring $\Z[\alpha]$ is not $p$-maximal if and only if there exists $u\in\Z[X]$, with $u$ mod $p$ irreducible, such that $f\in (p^2+up+u^2)$ in $\Z[X]$.
\end{namedthm}

In particular, the $p$-maximality depends just on $f$ mod $p^2$. The probability that such a polynomial modulo $p^2$ is in the above ideal (for a fixed $u$) is given by the following.

\begin{namedthm}{Theorem}[\cite{ABZ}, Proposition 3.4]
	Let $g\in\mathbb{F}_{p}[X]$ monic, of degree $m$; then$$\frac{1}{p^{2n}}\underset{f\in (gp+g^2) }{\underset{\tiny{\mbox{monic,\ }\deg f=n}}{\sum_{f\in(\Z/p^2\Z)[X]}}}1=\begin{cases}
		0&if\ 2m>n\\
		\frac{1}{p^{3m}}&if\ 2m\le n.
	\end{cases}$$
\end{namedthm}

\begin{coroll}In the above notations,
	the average of the number of primes dividing $a_f$ is$$
	\mathbb{E}_N(|\{p:p|a_f\}|)\ll_{n}1,$$as $N\rightarrow+\infty$.
\end{coroll}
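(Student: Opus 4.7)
The plan is to combine the two cited theorems from \cite{ABZ} with a mod-$p^2$ analogue of Lemma \ref{l1}, and to handle large primes via a pointwise bound coming from \eqref{eq12}. Recall that $a_f=[\mathcal{O}_{\Q_f}:\Z[\alpha]]$, so $p\mid a_f$ exactly when $\Z[\alpha]$ is not $p$-maximal, which by the first cited theorem means there exists $u\in\Z[X]$ with $u\bmod p$ monic irreducible of some degree $m$ (necessarily with $2m\le n$, by Proposition 3.4 of \cite{ABZ}), such that $f\in(p^2+up+u^2)$ in $\Z[X]$.

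I would split at the threshold $P=N^{1/(2n+1)}$. For primes $p>P$, I would use that $a_f^2\mid d_f\ll_n N^{2n-2}$, hence $a_f\ll_n N^{n-1}$ \emph{pointwise}, so
\[
|\{p>P:\,p\mid a_f\}|\;\le\;\frac{\log a_f}{\log P}\;\ll_n\;1
\]
for every $f\in\mathscr{P}_{n,N}^0$, which immediately gives an $O_n(1)$ contribution to the expectation. For primes $p\le P$, I would first establish the mod-$p^2$ version of Lemma \ref{l1}: since $p^{2n}<N$ in this range, the same coefficient-by-coefficient counting as in the proof of Lemma \ref{l1} yields $\p_N(f\equiv g\bmod p^2)=p^{-2n}+O(N^{-1})$ for every monic degree-$n$ polynomial $g\in(\Z/p^2\Z)[X]$. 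Combining this with Proposition 3.4 of \cite{ABZ} (which provides $p^{2n-3m}$ polynomials in $(up+u^2)\bmod p^2$ for each $u$ of degree $m\le n/2$) and with the standard bound $p^m/m$ on the number of monic irreducible $u$ of degree $m$ in $\mathbb{F}_p[X]$, I obtain
\[
\p_N(p\mid a_f)\;\le\;\sum_{m=1}^{\lfloor n/2\rfloor}\frac{p^m}{m}\Big(\frac{1}{p^{3m}}+O(p^{2n-3m}N^{-1})\Big)\;\ll_n\;\frac{1}{p^2}+\frac{p^{2n-2}}{N}.
\]
Summing the main term over all primes gives $\sum_p p^{-2}=O(1)$, and the error term satisfies $\sum_{p\le P}p^{2n-2}/N\ll P^{2n-1}/(N\log P)=o(1)$. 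Adding the contributions of both ranges yields the corollary.

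The only piece that is not completely routine is the mod-$p^2$ analogue of Lemma \ref{l1}, but this is a direct adaptation of the original argument: each coefficient $f_i$ has $2N/p^2+O(1)$ admissible integer values in $[-N,N]$, giving $(2N)^n/p^{2n}+O(N^{n-1})$ monic polynomials of height $\le N$ in a given residue class mod $p^2$, and the stated probability follows after dividing by $|\mathscr{P}_{n,N}^0|=(2N)^n(1+O(N^{-1}))$. The main conceptual point is the squarefull-type gain: the condition $p\mid a_f$ forces a congruence mod $p^2$ of codimension $3m\ge 3$, which beats the codimension-$m$ degree count and produces the summable bound $1/p^2$.
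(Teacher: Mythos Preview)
Your proof is correct and follows essentially the same approach as the paper: both split into small and large primes, use the two cited \cite{ABZ} results together with a mod-$p^2$ version of Lemma~\ref{l1} to get $\p_N(p\mid a_f)\ll_n p^{-2}+p^{2n-2}N^{-1}$ for small primes, and handle large primes by the pointwise bound $\log a_f/\log P\ll_n 1$ coming from $a_f^2\mid d_f\ll_n N^{2n-2}$. The only cosmetic difference is your threshold $P=N^{1/(2n+1)}$ versus the paper's $N^{1/(2n)}$; either choice makes both the counting mod $p^2$ and the error-term summation go through.
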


\begin{proof}

	From theorems 2 and 3, we deduce that if $g\in\mathbb{F}_{p}[X]$ is monic, of degree $m\le n/2$, then
	\begin{align*}
		\frac{1}{|\mathscr{P}_{n,N}^{0}|}\underset{f\in (p^2+gp+g^2)}{\sum_{f\in\mathscr{P}_{n,N}^{0}}}1&=\frac{1}{|\mathscr{P}_{n,N}^{0}|}\underset{h\in (gp+g^2)}{\underset{\tiny{\mbox{monic,\ }\deg h=n}}{\sum_{h\tiny\mbox{ mod }p^2}}}\  \underset{f\equiv h\tiny\mbox{ mod }p}{\sum_{f\in \mathscr{P}_{n,N}^{0}}}1\\
		&=\underset{h\in (gp+g^2)}{\underset{\tiny{\mbox{monic,\ }\deg h=n}}{\sum_{h\tiny\mbox{ mod }p^2}}}\left( \frac{1}{p^{2n}}+O(N^{-1})\right) \\
		&=\frac{1}{p^{3m}}+O(p^{2n-3m}N^{-1}),
	\end{align*}for all primes $p<N^{1/2n}$.
	
	The next step is to compute the probability $\p_N(f\in \mathscr{P}_{n,N}^{0}:p|(d_f/D_{\Q_f}))$, which is, by the above\begin{multline*}
		\frac{1}{|\mathscr{P}_{n,N}^{0}|}\sum_{m\le n/2}\underset{\deg g=m}{\underset{\tiny{\mbox{monic, irreducible}}}{\sum_{g\in\mathbb{F}_{p}[X]}}}\ \underset{f\in(p^2+gp+g^2)}{\sum_{f\in \mathscr{P}_{n,N}^{0}}}1\\=\sum_{m\le n/2}\underset{\deg g=m}{\underset{\tiny{\mbox{monic, irreducible}}}{\sum_{g\in\mathbb{F}_{p}[X]}}}\left( \frac{1}{p^{3m}}+O(p^{2n-3m}N^{-1})\right) \\
		=\sum_{m\le n/2}\left( \frac{p^m}{m}+O\left( \frac{p^{m-1}}{m}\right) \right) \left( \frac{1}{p^{3m}}+O(p^{2n-3m}N^{-1})\right) \\
		=\sum_{m\le n/2}\left( \frac{1}{mp^{2m}}+O\left( \frac{p^{2n}}{mp^{2m}}N^{-1}+\frac{1}{mp^{2m+1}}\right) \right) \\
		=\frac{1}{p^2}+O\left( p^{2n-2}N^{-1}+\frac{1}{p^3}\right) ,
	\end{multline*}for $p<N^{1/2n}$. Then, by partial summation, the number of primes (on average) diving $d_f/D_{\Q_f}$ is\begin{multline*}
		\mathbb{E}_N(|\{p:p|(d_f/D_{\Q_f})\}|)\\
		=\sum_{p<N^{1/2n}}\p_N(f:p|(d_f/D_{\Q_f}))+\frac{1}{|\mathscr{P}_{n,N}^{0}|}\sum_{f\in \mathscr{P}_{n,N}^{0}}\underset{p|(d_f/D_{\Q_f})}{\sum_{p\ge N^{1/2n}}}1\\
		=\sum_{p<N^{1/2n}}\frac{1}{p^2}+O\left( 1+\frac{N^{1-\frac{1}{2n}}}{\log N}N^{-1}\right) \\
		\ll_{n} 1,
	\end{multline*}since$$\underset{p|(d_f/D_{\Q_f})}{\sum_{ p\ge N^{1/2n}}}1\ll_{n}\frac{\log N}{\log N}\ll_{n} 1.$$
\end{proof}

\subsection{Upper bounds for the torsion part of the class number}
All these bounds represent evidence towards the so-called $\varepsilon$-conjecture.
\begin{conj}
	Let $K/\Q$ be a number field of degree $s$ with discriminant $D_K$. Then for every integer $\ell\ge1$ and every $\varepsilon>0$,$$h_K[\ell]\ll_{s,\ell,\varepsilon}D_K^{\varepsilon},$$where $h_K[\ell]$ is the order of the $\ell$-torsion subgroup of the class group.
\end{conj}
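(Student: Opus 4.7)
The plan is to reduce the conjecture to producing a large supply of completely split rational primes of very small size and then invoke the Ellenberg--Venkatesh lemma (\cite{EV}, Lemma 2.3) already used in the preceding Corollary. That lemma, applied to a number field $K/\Q$ of degree $s$, states that if one can exhibit $M$ rational primes $p_1,\ldots,p_M\le x$ each splitting completely in $K$, then for every $\varepsilon>0$,
\[
h_K[\ell]\ll_{s,\ell,\varepsilon} D_K^{1/2+\varepsilon}\cdot x^{-M/(\ell(s-1))}.
\]
To deduce the $\varepsilon$-conjecture one would want to feed in $x=D_K^{\varepsilon'}$ with $\varepsilon'$ arbitrarily small and $M$ of the order of $\pi(x)/s!$, and then rescale $\varepsilon$.

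The key step is therefore an effective Chebotarev lower bound
\[
\#\{p\le x:p\text{ splits completely in }K\}\gg_{s}\pi(x)/s!
\]
valid down to $x=D_K^{\varepsilon'}$. I would attempt this by applying the Lagarias--Odlyzko effective Chebotarev density theorem to the Galois closure of $K$. Under GRH this yields the asymptotic with error $O(x^{1/2}\log(D_K^{s!}x))$, which is admissible only for $x\gg(\log D_K)^{2+\delta}$; unconditionally one is limited to $x\gg\exp(c\sqrt{\log D_K})$ and must additionally rule out a Siegel zero. Neither scale reaches $x=D_K^{\varepsilon'}$, so bridging the gap would require an auxiliary input such as a zero-density estimate for Artin $L$-functions, a sieve-theoretic lower bound for split primes, or a subconvexity estimate for the Dedekind zeta function of $K$.

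This last step is the main obstacle and is why the statement is offered here as a Conjecture rather than a Theorem. No currently available technique unconditionally produces the required density of split primes at the scale $D_K^{\varepsilon'}$; even under GRH the Ellenberg--Venkatesh machinery yields only the power saving $h_K[\ell]\ll_{s,\ell,\varepsilon}D_K^{1/2-1/(2\ell(s-1))+\varepsilon}$, far from $D_K^{\varepsilon}$. The full conjecture is known only in a few cases, notably $\ell=2$ by Gauss's genus theory and $\ell=3$ over quadratic fields by Ellenberg--Venkatesh. The averaging technique developed earlier in this paper explicitly does not bypass the obstruction in the pointwise setting: it controls $\pi_f(x)$ on average over $f\in\mathscr{P}_{n,N}^0$, which is enough to invoke Ellenberg--Venkatesh for almost every $f$ (the content of the preceding Corollary), but not for every number field $K$. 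I therefore do not expect the conjecture to be accessible in full generality within the framework of this article.
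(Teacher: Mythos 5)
The statement you were given is the $\varepsilon$-conjecture, which the paper states precisely as a \emph{conjecture} and never proves: the article only offers partial evidence, namely the almost-all bound $h_f[\ell]\ll_{n,\ell,\varepsilon}D_{\Q_f}^{\frac{1}{2}-\frac{1}{(2n-2)\log\log d_f}+\varepsilon}$ of Corollary \ref{c5}, obtained by combining the averaged Chebotarev result with the Ellenberg--Venkatesh lemma. Your assessment is therefore correct and consistent with the paper: no proof is available, the obstruction is exactly the lack of an unconditional (or even GRH-conditional) supply of completely split primes at scale $D_K^{\varepsilon'}$, and the averaging over $\mathscr{P}_{n,N}^0$ only rescues the situation for almost all $f$, not for an individual field. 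One small inaccuracy worth fixing: the Ellenberg--Venkatesh lemma as quoted in the paper gives $h_K[\ell]\ll_{s,\ell,\varepsilon}D_K^{1/2+\varepsilon}M^{-1}$ for $M$ split primes below $D_K^{\delta}$ with $\delta<\frac{1}{2\ell(s-1)}$, not the form $D_K^{1/2+\varepsilon}x^{-M/(\ell(s-1))}$ you wrote; this does not change your conclusion, since even the correct form with $M\asymp\pi(D_K^{\delta})$ yields only a fixed power saving, far from $D_K^{\varepsilon}$.
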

Using the well-known Minkowski bound ($ r_2 $ is the number of real $ \Q $-embeddings of $ K $)$$h_K\le\frac{s!}{s^s}\frac{4^{r_2}}{\pi^{r_2}}D_K^{1/2}(\log D_K)^{s-1}$$one has$$h_K\ll_{s,\varepsilon} D^{1/2+\varepsilon}_K$$for any $ \varepsilon>0 $. We can of course use the above to bound the $ \ell $-part $ h_K[\ell] $ of $ h_K $. But we'd like to improve the above estimate, and the main point we're going to use is the existence of "many" splitting completely primes, which contributes significantly to the quotient of the class group by its $\ell$-torsion. See Theorem 4 for the precise statement. The GRH guarantees the existence of such primes, but here, we'd like to proceed unconditionally.\\

Let $K/\Q$ be a number field. The presence of "small" primes that split completely in $ K$, give a means to improve the Minkowski lower bound, using the following theorem (\cite{EV}, Lemma 2.3).
\begin{namedthm}{Theorem}[Ellenberg, Venkatesh]
	Let $ K/\Q $ be a field extension of degree $ s $, and let $\ell$ be a positive integer. Set $ \delta<\frac{1}{2\ell(s-1)} $ and suppose that$$|\{p\le D_K^\delta:p\mbox{ splits completely in }K/\Q\}|\ge M.$$Then, for any $ \varepsilon>0 $ $$h_K[\ell]\ll_{s,\ell,\varepsilon}D^{1/2+\varepsilon}_KM^{-1}. $$
\end{namedthm}
\noindent Back to our set, for each $ f \in\mathscr{P}_{n,N}^{0}$, let $h_f=h_{\Q_f}$.
\begin{coroll}\label{c5}
	For every positive integer $\ell$, $\varepsilon>0$ and for almost all $ f \in\mathscr{P}_{n,N}^{0}$, outside of a set of size $ o(N^{n}) $, we have$$h_f[\ell]\ll_{n,\ell,\varepsilon}D_{\Q_f}^{\frac{1}{2}-\frac{1}{(2n-2)\log\log d_f}+\varepsilon},$$as $ N\rightarrow+\infty $.
\end{coroll}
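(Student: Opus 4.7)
The plan is to combine the theorem of Ellenberg and Venkatesh stated above with the Central Limit Theorem (Theorem \ref{t1}), applied to the splitting type $r=(n,0,\dots,0)$, which corresponds to primes $p$ for which $f\bmod p$ splits into $n$ distinct linear factors (equivalently, primes splitting completely in $\Q_f/\Q$). For this type $\delta(r)=1/n!$, and since only primes with $p\nmid d_f$ can produce this splitting type, $\pi_{f,r}(x)$ is a legitimate lower bound for the quantity $M$ appearing in the Ellenberg--Venkatesh theorem.

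First I would fix $\delta:=1/((2n-2)\log\log d_f)$ and set $x:=D_{\Q_f}^{\delta}$. By \ref{eq12} together with the trivial upper bound $d_f\ll N^{2n-2}$, for almost all $f$ one has $d_f\asymp N^{2n-2}$, hence $\log\log d_f\sim\log\log N$ and $\delta\sim 1/((2n-2)\log\log N)$. Combined with $D_{\Q_f}\le d_f$, this places $x$ inside the admissible range $x\le N^{1/\log\log N}$ of Theorem \ref{t1}. Moreover, for $N$ large enough relative to $n,\ell$ one has $\delta<1/(2\ell(n-1))$, so the hypothesis of the Ellenberg--Venkatesh theorem is satisfied.

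Next I would establish that outside an exceptional set of size $o(N^n)$ one has
$$\pi_{f,r}(x)\ge\frac{1}{2\,n!}\,\pi(x).$$
This is the step I expect to be the main obstacle: Theorem \ref{t1} only states density convergence to a Gaussian, which a priori still permits the bad set to have positive density. The remedy is to invoke Proposition \ref{p2} directly and apply Markov's inequality to the moment bound
$$\mathbb{E}_N\bigl((\pi_{f,r}(x)-\pi(x)/n!)^k\bigr)\ll_{n,r}C_{k,r}\pi(x)^{k/2},$$
with $k$ a slowly growing even integer. This forces the density of $f$ for which $|\pi_{f,r}(x)-\pi(x)/n!|\ge\pi(x)/(2n!)$ to be $o(1)$, since $\pi(x)\to\infty$ and $C_{k,r}$ grows only like a power of $k$.

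Finally I would apply the Ellenberg--Venkatesh theorem with $M=\pi_{f,r}(x)\gg x/\log x = D_{\Q_f}^{\delta}/(\delta\log D_{\Q_f})$ to obtain
$$h_f[\ell]\ll_{n,\ell,\varepsilon}D_{\Q_f}^{1/2+\varepsilon}M^{-1}\ll_{n,\ell,\varepsilon}D_{\Q_f}^{1/2-\delta+\varepsilon}\log D_{\Q_f}.$$
Since $\log D_{\Q_f}\ll_{\varepsilon}D_{\Q_f}^{\varepsilon}$ and $\delta=1/((2n-2)\log\log d_f)$, absorbing the logarithmic factor and renaming $\varepsilon$ yields the stated bound. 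Taking the union of the exceptional sets from steps one and two keeps the total excluded set of size $o(N^n)$.
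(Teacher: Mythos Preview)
Your overall strategy is right, but there is a structural gap that breaks the argument as written. You set $x:=D_{\Q_f}^{\delta}$ with $\delta=1/((2n-2)\log\log d_f)$; this makes $x$ a function of $f$. But Proposition~\ref{p2} (and Theorem~\ref{t1}) are statements about an expectation over $f\in\mathscr{P}_{n,N}^{0}$ for a \emph{single} value of $x$ depending only on $N$: the moment bound $\mathbb{E}_N\bigl((\pi_{f,r}(x)-\delta(r)\pi(x))^k\bigr)\ll C_{k,r}\pi(x)^{k/2}$ has no meaning if $x$ varies with the summation variable $f$, and your Markov step cannot be carried out. The paper avoids this by fixing $x=N^{1/\log\log N}$ once and for all, obtaining $\pi_{f,r}(x)\gg\delta(r)\pi(x)$ for almost all $f$, and only \emph{afterwards} rewriting $x$ in terms of $D_{\Q_f}$ via $d_f\asymp N^{2n-2}$ and the relation $d_f=a_f^{2}D_{\Q_f}$.

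That reversal of order is not cosmetic: it forces you to confront a second issue you skipped. To apply Ellenberg--Venkatesh you need the fixed $x=N^{1/\log\log N}$ to satisfy $x\le D_{\Q_f}^{\delta'}$ for some $\delta'<\tfrac{1}{2\ell(n-1)}$, equivalently $\log D_{\Q_f}\gg \log N/\log\log N$. Nothing you wrote rules out $D_{\Q_f}$ being small (say bounded), in which case $D_{\Q_f}^{\delta}$ is bounded, $\pi(D_{\Q_f}^{\delta})$ is bounded, and no useful $M$ is available. The paper handles this with Lemma~\ref{l3}, which shows that $D_{\Q_f}>N^{1/\sqrt{\log\log N}}$ outside a set of density zero; this is what guarantees the exponent $\bigl(\tfrac{1}{2n-2}+\tfrac{\log a_f}{(n-1)\log D_{\Q_f}}\bigr)/\log\log d_f$ tends to $0$, so a legal $\delta'$ exists. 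Your proposal needs this lemma (or an equivalent input) and currently has no substitute for it. Your instinct to use Proposition~\ref{p2} with Markov rather than the bare CLT is sound---indeed it makes the ``almost all'' claim rigorous---but it must be applied at a uniform $x$, and then coupled with Lemma~\ref{l3} to pass from $N$ to $D_{\Q_f}$.
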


In order to prove Corollary \ref{c5}, we need the following lemma. If $G$ is a transitive subgroup of $S_n$, we denote by$$\mathscr{F}_s(X,G)=\{L/\Q:[L:\Q]=s,\ G_{\widetilde{L}/\Q}\cong G,\ D_K\le X\},$$where $\widetilde{L}$ is the Galois closure of $L$ over $\Q$. Moreover, in the following $H$ is the \textit{multiplicative Weil height}. If $\alpha$ is an algebraic number of degree $s$ and $f$ is its minimal polynomial over $\Q$, then the \textit{Mahler measure} of $f$ is$$M(f)=H(\alpha)^s.$$Mahler showed that $M(f)$ and $\h(f)$ are commensurate in the sense that$$\h(f)\ll M(f)\ll \h(f).$$In particular $H(\alpha)\ll N^{1/s}$.\\

We underline that the following result also holds if we put $D_f$ in place of $D_{\Q_f}$.

\begin{lemma}\label{l3}
		The density of the set of $f\in\mathscr{P}_{n,N}^{0}$ so that $D_{\Q_f}\le N^{1/\sqrt{\log\log N}}$ is zero, as $N\rightarrow+\infty$.
\end{lemma}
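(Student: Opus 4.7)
The plan is to control $|\{f \in \mathscr{P}_{n,N}^0 : D_{\Q_f} \le N^{1/\sqrt{\log\log N}}\}|$ by stratifying over the stem field $L := \Q_f$: first I would bound the number of degree-$n$ number fields $L \subset \Qal$ of small discriminant, then, for each fixed $L$, bound the number of monic integer polynomials $f$ of height at most $N$ with $\Q_f = L$. Multiplying the two upper bounds and choosing the threshold appropriately will force the count to be $o(N^n)$, and since $|\mathscr{P}_{n,N}^0| \asymp_n N^n$, the density will then vanish.

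Setting $Y := N^{1/\sqrt{\log\log N}}$, the first step is provided by Schmidt's classical bound
\[
\#\{L \subset \Qal : [L:\Q]=n,\; |D_L| \le Y\} \ll_n Y^{(n+2)/4}.
\]
For the second step, I would fix a degree-$n$ field $L$ with $|D_L| \le Y$ and note that any $f \in \mathscr{P}_{n,N}$ with $\Q_f = L$ is the minimal polynomial of some $\alpha \in \mathcal{O}_L$ with $\Q(\alpha) = L$, with the correspondence at most $n$-to-one. The Mahler-measure comparison $\h(f) \asymp M(f) = H(\alpha)^n$ (recalled in the excerpt) together with $\h(f) \le N$ gives $H(\alpha) \ll N^{1/n}$, and hence $|\sigma(\alpha)| \ll_n N^{1/n}$ for every archimedean embedding $\sigma$ of $L$. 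Counting lattice points in a box of volume $\ll_n N$ inside the Minkowski embedding of $\mathcal{O}_L \subset \R^n$ (whose covolume is $\asymp \sqrt{|D_L|} \ge 1$) yields at most $\ll_n N$ such $\alpha$, and hence at most $\ll_n N$ such polynomials per field. Combining,
\[
|\{f \in \mathscr{P}_{n,N}^0 : D_{\Q_f} \le Y\}| \ll_n N \cdot Y^{(n+2)/4} = N^{1 + (n+2)/(4\sqrt{\log\log N})},
\]
which is $o(N^n)$ for every $n \ge 2$, yielding the claim.

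The main technical point will be making the lattice-point count uniform in $L$: one needs an upper bound on the number of $\alpha \in \mathcal{O}_L$ in a prescribed Minkowski box whose implicit constant does not degenerate as $L$ varies over fields of small discriminant. This is handled by a standard Davenport-type counting argument, with an error term controlled by the successive minima of $\mathcal{O}_L$; the trivial inequality $|D_L| \ge 1$ guarantees that the main term $\ll N/\sqrt{|D_L|}$ is in fact $\ll N$ uniformly in $L$. Once this uniformity is in place, the rest is routine bookkeeping, and the choice $Y = N^{1/\sqrt{\log\log N}}$ (so that $Y^{(n+2)/4} = N^{o(1)}$) makes the final estimate $N^{1+o(1)}$, comfortably $o(N^n)$.
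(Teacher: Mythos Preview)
Your overall strategy --- stratify by the stem field $L=\Q_f$, bound the number of such $L$ with $|D_L|\le Y$ via Schmidt, and for each fixed $L$ bound the number of $f\in\mathscr{P}_{n,N}^0$ with $\Q_f\cong L$ by counting algebraic integers of bounded height --- is exactly the paper's approach. The gap is in your passage from the height bound to a box bound: from $\h(f)\asymp M(f)=H(\alpha)^n\ll_n N$ you correctly obtain $H(\alpha)\ll_n N^{1/n}$, but the further claim ``hence $|\sigma(\alpha)|\ll_n N^{1/n}$ for every archimedean $\sigma$'' is false. For an algebraic integer one has $H(\alpha)^n=\prod_{\sigma}\max(1,|\sigma(\alpha)|)$, and this product can be $\ll N$ while one conjugate is as large as $N$; e.g.\ $f(X)=X^2-NX+1$ has $\h(f)=N$ but a root $\alpha\approx N$. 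The only box inclusion the height bound gives is $|\sigma(\alpha)|\ll_n N$ for each $\sigma$, and then your packing argument yields $\ll_n N^n$ integers per field, which is useless.

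The paper avoids this by never passing to a box: it works directly with the height region $\Omega_Y=\{x\in L_\infty: H(x)\le Y\}$ at $Y=N^{1/n}$, whose volume is $\asymp_n Y^n(\log Y)^{r_1+r_2-1}$, and invokes Proposition~2.2 of Lemke Oliver--Thorne \cite{LT} to get the uniform (in $L$) lattice-point count $|\Omega_Y\cap\mathcal{O}_L|\ll_n Y^n(\log Y)^{r_1+r_2-1}$. This gives $\ll_n N(\log N)^{r_1+r_2-1}$ polynomials per field, and combined with Schmidt's $Y^{(n+2)/4}$ bound yields the same $N^{1+(n+2)/(4\sqrt{\log\log N})+\varepsilon}=o(N^n)$ that you were aiming for. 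So the ``main technical point'' is not the uniformity of a box count (which, as you note, would be easy once $\lambda_1(\mathcal{O}_L)\gg 1$), but rather counting uniformly in the genuinely non-box region cut out by the height condition; that is where an input like \cite{LT} is needed.
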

\begin{proof}
	We have that$$
		\underset{D_{\Q_f}\le N^{1/\sqrt{\log\log N}}}{\sum_{f\in\mathscr{P}_{n,N}^{0}}}1\\
		\ll\sum_{L\in\mathscr{F}_n(N^{1/\sqrt{\log\log N}},S_n)}\underset{K_f\cong \widetilde{L}}{\sum_{f\in \mathscr{P}_{n,N}^{0}}}1.$$For any $L$ as above with signature $(r_1,r_2)$,\begin{align*}
			\underset{K_f\cong \widetilde{L}}{\sum_{f\in \mathscr{P}_{n,N}^{0}}}1&\le|\{\alpha\in\mathcal{O}_L:\Q(\alpha)\cong L,\ H(\alpha)\ll_{n}N^{1/n}\}|\\
			&\ll_{n}|\Omega_{N^{1/n}}\cap(\mathcal{O}_L\setminus \Z)|,
		\end{align*}where for $Y\ge1$, $\Omega_Y$ is the subset of the Minkowski space $L_\infty=\R^{r_1}\times\Co^{r_2}$ of elements with Weil height over $\Q$ at most $Y$.\\
		By applying Davenport's Lemma \cite{Da}, which says that the volume of a region is approximated by the number of lattice points in it, and by computing the volume of $\Omega_Y$ we achieve$$|\Omega_Y\cap\Z^n|\ll_{n}Y^{n}(\log Y)^{r_1+r_2-1}.$$By Proposition 2.2 of \cite{LT},$$|\Omega_Y\cap\mathcal{O}_L|\ll_{n}Y^{n}(\log Y)^{r_1+r_2-1}$$as well. In particular, $$\underset{K_f\cong \widetilde{L}}{\sum_{f\in \mathscr{P}_{n,N}^{0}}}1\ll_{n}N(\log N)^{r_1+r_2-1}.$$It follows that$$\underset{D_{\Q_f}\ll N^{1/\sqrt{\log\log N}}}{\sum_{f\in\mathscr{P}_{n,N}^{0}}}1\ll N^{1+\frac{n+2}{4\sqrt{\log\log N}}+\varepsilon }$$for every $\varepsilon>0$, by using Schmidt bound \cite{Sc} for the number of field extensions with bounded discriminant. We conclude by observing that$$\frac{N^{1+\frac{n+2}{4\sqrt{\log\log N}}+\varepsilon }}{N^{n-1}}\underset{N\rightarrow+\infty}{\longrightarrow}0.$$
\end{proof}

\begin{proof}(Corollary \ref{c5})
By Theorem \ref{t1}, for $ x=N^{1/\log\log N} $, and $ \alpha=\alpha(N)=o(\log N) $ for large $ N $,
$$\p_N\left( -N^{1/\alpha}\le\frac{\pi_{f,r}(x)-\delta(r)\pi(x)}{(\delta(r)-\delta(r)^2)^{1/2}\pi(x)^{1/2}}1\le N^{1/\alpha}\right)\underset{N\rightarrow+\infty}{\longrightarrow}1.$$In particular,$$\pi_{f,r}(x)\ge\delta(r)\pi(x)-N^{1/\alpha}(\delta(r)-\delta(r)^2)^{1/2}\pi(x)^{1/2}$$for all but $ o(N^n) $ $ f $ in the set $ \mathscr{P}_{n,N}^{0} $. Pick $ \alpha=3\log\log N $; then $ N^{1/\alpha}\pi(x)^{1/2}\ll_r\pi(x) $. By enlarging $N$, we can assume that$$N^{1/\alpha}(\delta(r)-\delta(r)^2)^{1/2}\pi(x)^{1/2}\ll_r\frac{1}{2}\delta(r)\pi(x).$$Then we get\begin{equation}
		\pi_{f,r}(x)
	\gg\delta(r)\pi(x).
	\label{eq13}
\end{equation}

We consider the totally split primes (corresponding to the trivial conjugacy class).
 Since $d_f\asymp N^{2n-2}$ for almost all $f$, as stated in \ref{eq12}, and by the relation \ref{eq11} between discriminants, we are bounding from below the primes \begin{align*}
	p&\ll_n N^{1/\log\log N}\\
	&\ll(d_f^{1/(2n-2)})^{1/\log\log d_f}\\
	&=(D_{\Q_f}^{1/(2n-2)}a_f^{1/(n-1)})^{1/\log\log d_f}\\
	&=\left(D_{\Q_f}^{\frac{1}{(2n-2)}+\frac{\log a_f}{(n-1)\log D_{\Q_f}}}\right)^{1/\log\log d_f}
\end{align*} splitting completely in $ K_f/\Q $. 
By the relation between discriminants, one has$$a_f\ll N^{n-1},$$By Lemma \ref{l3}, for almost all $f$ the second summand in the exponent $\frac{\log a_f}{\log D_{\Q_f}\log\log d_f}$ goes to zero as $N\rightarrow+\infty$. Hence, we have that for almost all $f$, the exponent\begin{align*}
	\left(\frac{1}{(2n-2)}+\frac{\log a_f}{(n-1)\log D_{\Q_f}}\right)
	\cdot\frac{1}{\log\log d_f} \underset{N\rightarrow+\infty}{\longrightarrow}0.
\end{align*}
In particular, we can take $\delta>0$ so that
\begin{align*}
	\left(\frac{1}{(2n-2)}+\frac{\log a_f}{(n-1)\log D_{\Q_f}}\right)
	\cdot\frac{1}{\log\log d_f}<\delta<\frac{1}{2\ell(n!-1)}.
\end{align*}

It turns out, by \ref{eq13}, that the primes $ p\ll D_f^\delta $ splitting completely in $ K_f/\Q $ number at least$$\gg_{n,\ell}\frac{(d_f)^{\frac{1}{(2n-2)\log\log d_f}}\log\log d_f}{\log d_f}.$$Now, the number of primes splitting completely in $\Q_f/\Q$ is larger than the number of primes splitting completely in $ K_f/\Q $; therefore, by Theorem 4 together with (4) again,\begin{align*}
	h_f[\ell]&\ll_{n,\ell,\varepsilon}\frac{D_{\Q_f}^{\frac{1}{2}+\varepsilon}\log d_f}{D_{\Q_f}^{\frac{1}{(2n-2)\log\log d_f}}a_f^{\frac{1}{(n-1)\log\log d_f}}\log\log d_f}.	
\end{align*}In particular, since $(\log d_f)/a_f\ll\log D_{\Q_f}$,$$h_f[\ell]\ll_{n,\ell,\varepsilon}D_{\Q_f}^{\frac{1}{2}-\frac{1}{(2n-2)\log\log d_f}+\varepsilon}$$for any $ \varepsilon>0 $, for almost all $ f $ as $ N\rightarrow+\infty $.
\end{proof}

We can slightly improve this last upper bound by adding an additional hypothesis.
\begin{thm}\label{t5}
	Let $K$ be a number field of degree $s$. There exists $\theta\in\mathcal{O}_K-\Z$ whose minimal polynomial $f_{\theta}$ has height$$\mbox{ht}(f_{\theta})\le 3^{s}\Big(\frac{D_K}{s}\Big)^{\frac{s}{2s-2}}.$$
\end{thm}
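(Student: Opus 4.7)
The natural approach is via Minkowski's theorem from the geometry of numbers. I would work with the Minkowski embedding
\[\iota: K \hookrightarrow V := \R^{r_1} \times \Co^{r_2},\qquad \theta \mapsto (\sigma_1(\theta),\ldots,\sigma_{r_1+r_2}(\theta)),\]
where $r_1,r_2$ are the numbers of real embeddings and pairs of complex embeddings of $K$ (so $r_1+2r_2 = s$). Then $\Lambda := \iota(\mathcal{O}_K)$ is a full-rank lattice in $V$ of covolume $2^{-r_2}\sqrt{D_K}$. Endow $V$ with the sup-norm $\|v\|_\infty = \max_i |v_i|$ and let $\Omega_1$ denote its unit ball, which has volume $2^{r_1}\pi^{r_2}$.

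The central step is to pin down the first successive minimum $\lambda_1$ of $\Lambda$ with respect to $\Omega_1$ as \emph{exactly} $1$. The vector $\iota(1)=(1,\ldots,1)$ gives $\lambda_1 \le 1$; conversely, for every nonzero $v \in \mathcal{O}_K$ the identity $|N_{K/\Q}(v)| = \prod_i |\sigma_i(v)| \ge 1$ together with the AM--GM inequality forces $\|\iota(v)\|_\infty \ge 1$. Minkowski's second theorem then yields
\[\lambda_1\lambda_2\cdots\lambda_s \;\le\; \frac{2^s\,\mathrm{covol}(\Lambda)}{\mathrm{Vol}(\Omega_1)} \;=\; \Big(\frac{2}{\pi}\Big)^{r_2}\sqrt{D_K}\;\le\;\sqrt{D_K},\]
and the chain $1 = \lambda_1 \le \lambda_2 \le \cdots \le \lambda_s$ immediately gives $\lambda_2^{s-1} \le \sqrt{D_K}$, i.e.\ $\lambda_2 \le D_K^{1/(2s-2)}$. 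By the definition of the second successive minimum, one can choose $\theta \in \mathcal{O}_K$ with $\iota(\theta)$ linearly independent from $\iota(1)$ over $\R$ and $\|\iota(\theta)\|_\infty \le D_K^{1/(2s-2)}$: if $\iota(\theta)$ were a real multiple of $\iota(1)$, all the conjugates $\sigma_i(\theta)$ would coincide in $\Co$, forcing $\theta \in \mathcal{O}_K \cap \Q = \Z$, so such a $\theta$ automatically lies in $\mathcal{O}_K \setminus \Z$.

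The height bound for $f_\theta$ then follows from Vieta's formulas. Set $B := D_K^{1/(2s-2)}$. The minimal polynomial $f_\theta$ has degree $m = [\Q(\theta):\Q] \le s$ and its roots, being conjugates of $\theta$, all satisfy $|\theta^{(j)}| \le B$, so
\[\h(f_\theta) \;\le\; \max_{0\le k\le m}\binom{m}{k}B^k \;\le\; (1+B)^m \;\le\; (1+B)^s.\]
The Minkowski discriminant bound forces $D_K \ge s$ for every $K\ne\Q$ (in fact $D_K$ grows exponentially in $s$), hence $B \ge 1$, so $(1+B)^s \le (2B)^s = 2^s D_K^{s/(2s-2)}$. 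The target inequality $2^s D_K^{s/(2s-2)} \le 3^s (D_K/s)^{s/(2s-2)}$ reduces to the elementary bound $s^{1/(2s-2)} \le 3/2$, which holds for every integer $s \ge 2$ (it is $\sqrt{2} \le 3/2$ at $s=2$, and the left-hand side is monotonically decreasing to $1$ thereafter). The one point that demands care is the identification $\lambda_1 = 1$: this is what upgrades Minkowski's bound on the product of successive minima into the existence of a genuinely \emph{new}, non-rational generator of controlled height, as opposed to a mere short lattice vector.
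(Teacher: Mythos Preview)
Your argument is correct. The paper does not actually prove this statement: its entire proof reads ``See \cite{GJ}, Appendix A.'' The approach you give---identifying $\lambda_1=1$ for the sup-norm on the Minkowski lattice via the norm inequality $|N_{K/\Q}(v)|\ge 1$, applying Minkowski's second theorem to bound $\lambda_2\le D_K^{1/(2s-2)}$, and then controlling the coefficients of $f_\theta$ by Vieta---is precisely the argument in the G\'elin--Joux appendix that the paper defers to, so you have reconstructed the intended proof. (One cosmetic remark: the lower bound $\|\iota(v)\|_\infty\ge 1$ follows immediately from $\prod_i|\sigma_i(v)|^{e_i}=|N_{K/\Q}(v)|\ge 1$ and $|\sigma_i(v)|\le \|\iota(v)\|_\infty$, so the invocation of AM--GM is unnecessary, though harmless.)
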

\begin{proof}
	See \cite{GJ}, Appendix A. 
\end{proof}

\begin{coroll}\label{c6}Assume that for almost all $f\in\mathscr{P}_{n,N}^0$ outside of a set of size $o(N^n)$, $\Q_f$ is generated over $\Q$ by an element $ \theta$ of small height as in Theorem \ref{t5}. Then for every positive integer $\ell$, for every $\varepsilon>0$, and for almost all $f\in\mathscr{P}_{n,N}^0$ outside of a set of size $o(N^n)$, we have
	$$h_{f}[\ell]\ll_{n,\ell,\varepsilon} D_{\Q_f}^{\frac{1}{2}-\frac{1}{n(2n-2)\log\log d_f}+\varepsilon},$$
	as $N\rightarrow+\infty$.
\end{coroll}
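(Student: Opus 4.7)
The strategy is to rerun the argument of Corollary \ref{c5} with the minimal polynomial $f_\theta$ in place of $f$, exploiting the reduced height $N' := 3^n(D_{\Q_f}/n)^{n/(2n-2)}$ to apply the Chebotarev-on-average result of Theorem \ref{t1} at a threshold $x$ that is a larger power of $D_{\Q_f}$ than the one used in \ref{c5}. Since $\Q(\theta) = \Q_f$ has Galois closure $K_f$, the polynomial $f_\theta$ lies in $\mathscr{P}_{n,N'}^{0}$, and a prime splits completely in $K_{f_\theta}/\Q$ precisely when it splits in $K_f/\Q$; such primes are, a fortiori, completely split in $\Q_f/\Q$.

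First, I would apply Theorem \ref{t1} (in the form of the lower bound \ref{eq13}) to the family $\mathscr{P}_{n,N'}^{0}$ with the trivial splitting type $r=(n,0,\ldots,0)$ and the threshold $x := (N')^{1/\log\log N'}$, obtaining $\pi_{f_\theta,r}(x) \gg \pi(x)/n!$ for all $f_\theta$ outside a density-$o(1)$ subset of $\mathscr{P}_{n,N'}^{0}$. In particular, the number $M$ of primes $p \leq x$ splitting completely in $\Q_f/\Q$ is $\gg x/\log x$.

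Next, I would express $x$ as a power of $D_{\Q_f}$. Using $\log N' = (n/(2n-2))\log D_{\Q_f} + O(1)$ together with $\log\log N' \asymp \log\log D_{\Q_f} \asymp \log\log d_f$ (the second comparison relying on Lemma \ref{l3} to rule out exceptionally small $D_{\Q_f}$, equivalently exceptionally large $a_f$), I would obtain $x \gg D_{\Q_f}^{\delta}$ with $\delta$ comparable to $1/(n(2n-2)\log\log d_f)$ once the constants of Theorem \ref{t5} and the $\log\log$ comparisons are accounted for. For $N$ large, $\delta$ is smaller than the admissibility threshold in Theorem 4. Plugging into Ellenberg--Venkatesh then gives
\[
h_f[\ell] \ll_{n,\ell,\varepsilon} \frac{D_{\Q_f}^{1/2+\varepsilon}}{M} \ll_{n,\ell,\varepsilon} D_{\Q_f}^{\tfrac{1}{2} - \tfrac{1}{n(2n-2)\log\log d_f} + \varepsilon},
\]
after absorbing the factor $\log D_{\Q_f}/\log\log D_{\Q_f}$ into $\varepsilon$.

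The principal difficulty is the transfer of the ``generic'' conclusion of Theorem \ref{t1} from the smaller family $\mathscr{P}_{n,N'}^{0}$ back to the original family $\mathscr{P}_{n,N}^{0}$. The map $f \mapsto f_\theta$ is non-canonical (the choice of $\theta$ is not unique), many-to-one, and its codomain's height bound $N'$ varies with $f$ through $D_{\Q_f}$. To conclude that for almost all $f$ in the surviving subset of the hypothesis the associated $f_\theta$ lies outside the exceptional set of Theorem \ref{t1} for its particular $N'$, a Fubini-type argument combined with a dyadic decomposition in $D_{\Q_f}$ seems necessary, with uniformity of the exceptional densities from Proposition \ref{p2} tracked explicitly across the range $D_{\Q_f} \in [N^{1/\sqrt{\log\log N}}, d_f]$ coming from Lemma \ref{l3} and the generic discriminant estimate \ref{eq12}.
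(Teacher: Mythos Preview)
Your strategy diverges from the paper's at the crucial step. You propose to apply Theorem~\ref{t1} (via the lower bound \ref{eq13}) to the auxiliary family $\mathscr{P}_{n,N'}^{0}$ housing the polynomials $f_\theta$, with $N'$ depending on $D_{\Q_f}$. The paper does \emph{not} change families: it invokes \ref{eq13} for the original family $\mathscr{P}_{n,N}^{0}$ exactly as in Corollary~\ref{c5}, securing $\gg \pi(x)/n!$ totally split primes below $x=N^{1/\log\log N}$ for almost all $f\in\mathscr{P}_{n,N}^{0}$. The role of $\theta$ in the paper is purely algebraic: from $\mathrm{ht}(f_\theta)\ll_n D_{\Q_f}^{n/(2n-2)}$ together with \cite{GZ} one obtains $d_{f_\theta}\asymp D_{\Q_f}^{n}$ with high probability, whence (combining with $d_f\asymp N^{2n-2}$) the relation $N\asymp C_n(f,\theta)\,D_{\Q_f}^{1/(n(2n-2))}$ with $C_n(f,\theta)=(a_{f_\theta}a_f^{n})^{2/(n(2n-2))}$. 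This is then used to re-express the same threshold $x=N^{1/\log\log N}$ as a power of $D_{\Q_f}$ before feeding into Ellenberg--Venkatesh.

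The transfer problem you flag is genuine, and the Fubini/dyadic sketch does not close it. For $D_{\Q_f}\asymp N^{2n-2}$ one has $N'\asymp N^{n}$, so $|\mathscr{P}_{n,N'}^{0}|\asymp N^{n^2}$, whereas the image of $f\mapsto f_\theta$ (restricted to any dyadic range of $D_{\Q_f}$) has size at most $\asymp N^{n}$. An exceptional set of density $o(1)$ in $\mathscr{P}_{n,N'}^{0}$ still has size $o(N^{n^2})$ and can contain the entire image; without structural information placing the $f_\theta$ in general position inside $\mathscr{P}_{n,N'}^{0}$, no density statement over the target family says anything about almost all $f$ in the source. The paper's route sidesteps this completely by keeping the probabilistic input (Theorem~\ref{t1}) in $\mathscr{P}_{n,N}^{0}$ and using $\theta$ only to rewrite $N$ in terms of $D_{\Q_f}$.
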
 
\begin{proof}In particular we have $\mbox{ht}(f_{\theta})\ll_n D_{\Q_f}^{\frac{n}{2n-2}}$.
	By \cite{GZ}, Corollary 2.2, it follows that for every $\varepsilon>0$, there exists $\delta=\delta(\theta,\varepsilon)$ such that, for $D_{\Q_f}$ large enough,$$\mathbb{P}(|d_{f_\theta}|>\delta D_{\Q_f}^n)>1-\varepsilon.$$We can then say that 
	 $d_{f_{\theta}}=D_{\Q_f}\cdot a^{2}_{f_{\theta}}\asymp D_{\Q_f}^{n}$ with high probability.

 Since $d_f\asymp N^{2n-2}$ for almost all $f$, and$$D_{\Q_f}\asymp \frac{N^{(2n-2)}}{a_f^2}$$for almost all $f$, we obtain$$D_{\Q_f}\asymp\frac{N^{n(2n-2)}}{a^{2}_{f_{\theta}}a_f^{2n}}.$$Rearranging gives$$N\asymp C_n(f,\theta)\cdot D_{\Q_f}^{\frac{1}{n(2n-2)}}$$for almost all $f\in\mathscr{P}_{n,N}^{0}$. We denoted by $$C_n(f,\theta)=(a_{f_{\theta}}a_f^{n})^{\frac{2}{n(2n-2)}}.$$

As in Corollary \ref{c5} we count the primes\begin{align*}
	p&\ll_n N^{1/\log\log N}\\
	&\ll \left( C_n(f,\theta)\cdot D_{\Q_f}^{\frac{1}{n(2n-2)}}\right) ^{1/\log\log d_f}\\
	&=\left( D_{\Q_f}^{\frac{1}{n(2n-2)}+\frac{\log C_n(f,\theta)}{\log D_{\Q_f}}}\right) ^{1/\log\log d_f}
\end{align*}splitting completely in $K_f/\Q$. 
Since $a_f\ll N^{n-1}$, and by the above $a_{f_{\theta}}^2\ll D_{\Q_f}^{n/2}$, $$C_n(f,\theta)\ll_n N\cdot D_{\Q_f}^{\frac{1}{2n-2}}.$$
By applying Lemma \ref{l3}, we see that the exponent $$\left( \frac{1}{n(2n-2)}+\frac{\log C_n(f,\theta)}{\log D_{\Q_f}}\right) \cdot\frac{1}{\log\log d_f}$$goes to 0 as $N\rightarrow+\infty$. We can then pick a$$\left( \frac{1}{n(2n-2)}+\frac{\log C_n(f,\theta)}{\log D_{\Q_f}}\right) \cdot\frac{1}{\log\log d_f}<\delta<\frac{1}{2\ell(n!-1)}.$$
The number of primes $p\ll D_{\Q_f}^{\delta}$ splitting completely is bounded above by$$\gg_{n,\ell}\frac{\Big(C_n(f,\theta)\cdot D_{\Q_f}^{\frac{1}{n(2n-2)}}\Big)^{1/\log\log d_f}\log\log d_f}{\log d_f},$$By Theorem 4 one gets$$h_{f}[\ell]\ll_{n,\ell,\varepsilon} D_{\Q_f}^{\frac{1}{2}-\frac{1}{n(2n-2)\log\log d_f}+\varepsilon}$$
as $N\rightarrow+\infty$, for every $\varepsilon>0$ and for almost all $f$. 
\end{proof}

	\section{Further results and problems}

\subsection{The range of $x$ and $N$}
 Regarding the range of $x,N$ for the average Chebotarev Theorem, in our proofs, the restriction $x\le N^{1/\log\log N}$ is essential, since the fact that the primes in our counting function are small enough with respect to $N$, allows to have a sufficient number of $S_n$-polynomials congruent to a given one modulo these primes. This is the statement of the crucial result Lemma \ref{l1}. It would be interesting to know in what interval of $x$ and $N$ these results actually hold. The latter is still an open problem.

\subsection{Other Galois groups}	
 Another goal is to provide similar results for polynomials in having as Galois group over $\Q$, either $S_n$ or a transitive proper subgroup of $S_n$. It would be interesting to exploit the Hilbert Irreducibility Theorem to get results for some group $G\subseteq S_n$.
 
 In general, for a subgroup $G\subseteq S_n$, the elements in a conjugacy class in $G$ necessarily have the same cycle type, but the converse need not to be true. That is, the cycle type of a conjugacy class in $G$ need not determine it uniquely. This uniqueness property does hold for cycle types for the full symmetric group, which implies that the cycle type of an $S_n$-polynomial having a square-free factorization mod $p$ uniquely determines the Frobenius element for an $S_n$-number field obtained by adjoining one root of it.\\
 Let's consider the case of the alternating group $A_n\subseteq S_n$. A single conjugacy class in $S_n$ that is contained in $A_n$ may split into two distinct classes. Also, note that the fact that conjugacy in $S_n$ is determined by cycle type, means that if $\sigma\in A_n$, then all of its conjugates in $S_n$ also lie in $A_n$. There is a full characterization of the behaviour of conjugacy classes in $A_n$.
 \begin{lemma}\label{l4}
 	A conjugacy class in $S_n$ splits into two distinct conjugacy classes under the action of $A_n$ if and only if its cycle type consists of distinct odd integers. Otherwise, it remains a single conjugacy class in $A_n$.
 \end{lemma}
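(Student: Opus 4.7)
The plan is to reduce the splitting question to a calculation inside the centralizer $C_{S_n}(\sigma)$ and then to read off the condition by computing signs of explicit generators. The key general principle, which I would state first, is the following orbit-stabilizer comparison: for $\sigma \in A_n$, the $S_n$-conjugacy class of $\sigma$ either equals a single $A_n$-conjugacy class, or splits into exactly two $A_n$-classes of equal size. Indeed, the $S_n$-class has size $|S_n|/|C_{S_n}(\sigma)|$ while the $A_n$-class of $\sigma$ has size $|A_n|/|C_{A_n}(\sigma)|$, and the index $[C_{S_n}(\sigma):C_{A_n}(\sigma)]$ is either $1$ or $2$. So the class splits precisely when $C_{S_n}(\sigma) \subseteq A_n$, i.e.\ when every permutation commuting with $\sigma$ is even.

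Next I would analyze $C_{S_n}(\sigma)$ concretely. Writing the cycle type of $\sigma$ as $(\ell^{m_\ell})_\ell$ (so there are $m_\ell$ cycles of length $\ell$), the centralizer decomposes as a direct product of wreath products
\[
C_{S_n}(\sigma) \;\cong\; \prod_{\ell} \bigl(C_\ell \wr S_{m_\ell}\bigr) \;=\; \prod_\ell \bigl((\Z/\ell\Z)^{m_\ell} \rtimes S_{m_\ell}\bigr),
\]
where the cyclic factors are generated by the individual $\ell$-cycles of $\sigma$, and the $S_{m_\ell}$ factor permutes the $m_\ell$ cycles of length $\ell$ among themselves. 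The two types of generators are: (i) a single $\ell$-cycle, which has sign $(-1)^{\ell-1}$, hence is even iff $\ell$ is odd; and (ii) the permutation swapping two $\ell$-cycles coordinate-by-coordinate, which is a product of $\ell$ transpositions and so has sign $(-1)^\ell$, hence is even iff $\ell$ is even.

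From this it is immediate to read off the condition $C_{S_n}(\sigma) \subseteq A_n$: from (i) we need every cycle length $\ell$ with $m_\ell \ge 1$ to be odd; from (ii) applied to pairs of same-length cycles we need that no $\ell$ with $m_\ell \ge 2$ occurs (since such $\ell$ would necessarily be odd by the previous point, making the swap odd). Combining the two conditions, $C_{S_n}(\sigma) \subseteq A_n$ if and only if the cycle type of $\sigma$ consists of pairwise distinct odd integers (fixed points counted as $1$-cycles). Note that this condition also guarantees $\sigma \in A_n$ automatically, since a product of odd-length cycles is always even.

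There is really no serious obstacle here; the only point requiring care is not forgetting fixed points when checking ``pairwise distinct,'' and correctly computing the sign of the coordinate-swap in the wreath product (the small subtle point that it is a product of $\ell$ transpositions, not $\ell-1$). I would finish by observing that when splitting occurs, the two $A_n$-classes have equal size $\tfrac{1}{2}|S_n|/|C_{S_n}(\sigma)|$, and when it does not, they coincide into a single $A_n$-class of size $|S_n|/|C_{S_n}(\sigma)|$, as asserted.
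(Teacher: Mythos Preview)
Your proof is correct and follows essentially the same approach as the paper: both reduce the splitting question to whether the centralizer $C_{S_n}(\sigma)$ contains an odd permutation, and both settle this by computing the signs of the same explicit elements (an $\ell$-cycle, and the coordinatewise swap of two $\ell$-cycles). The only cosmetic difference is that you package the reduction via orbit--stabilizer and invoke the wreath product description of the centralizer, whereas the paper argues both points directly without naming these structures.
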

 \begin{proof}
 	Note that the conjugacy class in $S_n$ of an element $\sigma\in A_n$ splits, if and only if there is no element $\tau\in S_n\setminus A_n$ commuting with $\sigma$. For if there is one, for each $\tau'\in S_n\setminus A_n$ we have$$\tau'\sigma\tau'^{-1}=\tau'\sigma\tau\tau^{-1}\tau'^{-1}=(\tau'\tau)\sigma(\tau'\tau)^{-1},$$and $\tau\tau'\in A_n$. On the other hand, if $\tau\sigma\tau^{-1}$ and $\sigma$, with $\tau\in S_n\setminus A_n$, are conjugated in $A_n$, then for some $\tau'\in A_n$, we have $\tau\sigma\tau^{-1}=\tau'\sigma\tau'^{-1}$, giving$$\tau'^{-1}\tau\sigma=\sigma\tau'^{-1}\tau,$$and hence $\tau'^{-1}\tau\in S_n\setminus A_n$ commutes with $\sigma$.
 	
 	Now suppose, $\sigma$ has a cycle $c_i$ of even length. A cycle of even length is an element of $S_n\setminus A_n$, and as $\sigma$ commutes with its cycles, we are done by the above. If $\sigma$ has two cycles $(a_1\dots a_k)$ and $(b_1\dots b_k)$ of the same odd length $k$, then $(a_1 b_1)\dots(a_kb_k)$ is a product of $k$ permutations (hence odd, so an element of $S_n\setminus A_n$) commuting with $\sigma$.
 	
 	Suppose $\sigma=c_1\dots c_s$ is a product of odd cycles $c_i$ of distinct lengths $d_i$. Let $\tau\in S_n$ be a permutation commuting with $\sigma$. Then $\tau$ must fix each of the $c_i$, that is, $\tau$ must be of the form $\tau=c_1^{a_1}\dots c_s^{a_s}$ for some $a_i\in\Z$. But as the $c_i$ are even permutations (as cycles of odd length), we have $\tau\in A_n$. So no $\tau\in S_n\setminus A_n$ commutes with $\sigma$ and we have the claim.
 \end{proof}

 As in the case of $S_n$ polynomials, we want to count the number of $G$-polynomials, where $G$ is a subgroup of the symmetric group $S_n$.
 We use the following result.

 \begin{namedthm}{Theorem}[Dietmann]
 	For every $\varepsilon>0$ and positive integer $n$,\begin{multline*}
 		|\{(a_0,\dots,a_{n-1})\in\Z^n:|a_j|\le N\ \forall j,\\
 		f(X)=X^n+a_{n-1}X^{n-1}+\dots+a_0\mbox{ has }G_f=G\}|\\ \ll_{n,\varepsilon}N^{n-1+1/[S_n:G]+\varepsilon},
 	\end{multline*}where $[S_n:G]$ is the index of $G$ in $S_n$.
 \end{namedthm}
 \begin{proof}
 See \cite{Di} and \cite{Di2}.
 \end{proof}

Consider the set$$\mathscr{P}_{n,N}^1=\{f\in\mathscr{P}_{n,N}:G_f=S_n\mbox{ or }A_n\}.$$

Now, if $G\subseteq S_n$, $G\neq S_n,A_n$ then its index in $S_n$ is greater or equal then $n$. From Theorem 6 we thus have that$$|\mathscr{P}_{n,N}^1|=(2N)^{n}+O( N^{n-1+\frac{1}{n} +\varepsilon }) .$$

Let $\mathscr{C}_r$ be a conjugacy class in $A_n$, with $r=(r_1,\dots,r_n)$ a square-free splitting type such that either $r_i$ is even for some $i$, or all the $r_i$'s are odd but $r_i=r_j$ for some $i\neq j$. 

By following the same argument as in Lemma \ref{l1} and Proposition \ref{p1}, we get the Chebotarev Theorem on average:$$\frac{1}{|\mathscr{P}_{n,N}^1|}\sum_{f\in \mathscr{P}_{n,N}^{1}}\Big(\underset{\Frob_{f,p}=\mathscr{C}_r}{\sum_{ p\le x}}1 \Big)=\delta(r)\pi(x)+C_r\log\log x+O_{n}(1),$$if $x<N^{\frac{1-(1/n)-\varepsilon}{n+1}}$.\\

\noindent\textbf{Remark.} Note that even if one doesn't fully control the conjugacy classes of a subgroup $G\subseteq S_n$ in terms of the cycle type, there is still interesting information to extract from it. Especially, about the number of totally splitting primes (corresponding to the trivial conjugacy class), which was the main tool in the application to class group torsion upper bounds of Section 5.2.	

\subsection{Generalization for number fields}

 One can consider the analogues over number fields, that is polynomials with coefficients in the ring of algebraic integers of a fixed finite extension $K/\Q$ of degree $d$. It is possible to generalize the work of Bhargava, towards the van der Waerden's conjecture. In this case, for a polynomial$$f(X)=X^n+\alpha_{n-1}X^{n-1}+\dots+\alpha_0\in\mathcal{O}_K[X],$$the \textit{height} is defined in terms of the integer coefficients of the $\alpha_i$ in a fixed integral basis of $\mathcal{O}_K$ over $\Z$. In particular, we get that the number of non-$S_n$-polynomials as above is$$\ll_{n,K}N^{d(n-1/2)}\log N,$$for all $d\ge1$, $n\ge2$, as $N\rightarrow+\infty$. The above result, as well as a refined version for some values of $d$ and $n$, are contained in my Ph.D. thesis as well as in  	
 https://doi.org/10.48550/arXiv.2212.11608.

\noindent
\footnotesize
DEPARTEMENT MATHEMATIK, ETH EIDGEN\"{O}SSICHE TECHNISCHE HOCHSCHULE, 8092 Z\"{U}RICH, SWITZERLAND\\
\textit{Email address}: \texttt{ilaria.viglino@yahoo.it}

\end{document}